\newif\ifpersonal
\numberwithin{equation}{section}
\theoremstyle{plain}
\newtheorem{thm-intro}{Theorem}[section]
\newtheorem{thm}{Theorem}[section]
\newtheorem{lem}[thm]{Lemma}
\newtheorem{prop}[thm]{Proposition}
\newtheorem{cor}[thm]{Corollary}
\theoremstyle{definition}
\newtheorem{defin}[thm]{Definition}
\newtheorem{notation}[thm]{Notation}
\newtheorem{eg}[thm]{Example}
\newtheorem{rem}[thm]{Remark}
\newcommand*{\personal}[1]{\textcolor[rgb]{0,0,1}{(Personal: #1)}}
\newcommand*{\todo}[1]{\textcolor{red}{(Todo: #1)}}
\newcommand*{\personal}[1]{\ignorespaces}
\newcommand*{\todo}[1]{\ignorespaces}
\newcommand{\C}{\mathbb C}
\newcommand{\rH}{\mathrm H}
\newcommand{\rR}{\mathrm R}
\newcommand{\cC}{\mathcal C}
\newcommand{\cD}{\mathcal D}
\newcommand{\cF}{\mathcal F}
\newcommand{\cH}{\mathcal H}
\newcommand{\cG}{\mathcal G}
\newcommand{\cO}{\mathcal O}
\newcommand{\cS}{\mathcal S}
\newcommand{\cT}{\mathcal T}
\newcommand{\cX}{\mathcal X}
\newcommand{\cY}{\mathcal Y}
\DeclareFontFamily{U}{BOONDOX-calo}{\skewchar\font=45 }
\DeclareFontShape{U}{BOONDOX-calo}{m}{n}{<-> s*[1.05] BOONDOX-r-calo}{}
\DeclareFontShape{U}{BOONDOX-calo}{b}{n}{<-> s*[1.05] BOONDOX-b-calo}{}
\DeclareMathAlphabet{\mathcalboondox}{U}{BOONDOX-calo}{m}{n}
\newcommand{\bbA}{\mathbb A}
\newcommand{\bA}{\mathbf A}
\newcommand{\bP}{\mathbf P}
\let\save@mathaccent\mathaccent
\newcommand*\if@single[3]{%
	\setbox0\hbox{${\mathaccent"0362{#1}}^H$}%
	\setbox2\hbox{${\mathaccent"0362{\kern0pt#1}}^H$}%
	\ifdim\ht0=\ht2 #3\else #2\fi
}
\newcommand*\rel@kern[1]{\kern#1\dimexpr\macc@kerna}
\newcommand*\widebar[1]{\@ifnextchar^{{\wide@bar{#1}{0}}}{\wide@bar{#1}{1}}}
\newcommand*\wide@bar[2]{\if@single{#1}{\wide@bar@{#1}{#2}{1}}{\wide@bar@{#1}{#2}{2}}}
\newcommand*\wide@bar@[3]{%
	\begingroup
	\def\mathaccent##1##2{%
		\let\mathaccent\save@mathaccent
		\if#32 \let\macc@nucleus\first@char \fi
		\setbox\z@\hbox{$\macc@style{\macc@nucleus}_{}$}%
		\setbox\tw@\hbox{$\macc@style{\macc@nucleus}{}_{}$}%
		\dimen@\wd\tw@
		\advance\dimen@-\wd\z@
		\divide\dimen@ 3
		\@tempdima\wd\tw@
		\advance\@tempdima-\scriptspace
		\divide\@tempdima 10
		\advance\dimen@-\@tempdima
		\ifdim\dimen@>\z@ \dimen@0pt\fi
		\rel@kern{0.6}\kern-\dimen@
		\if#31
		\overline{\rel@kern{-0.6}\kern\dimen@\macc@nucleus\rel@kern{0.4}\kern\dimen@}%
		\advance\dimen@0.4\dimexpr\macc@kerna
		\let\final@kern#2%
		\ifdim\dimen@<\z@ \let\final@kern1\fi
		\if\final@kern1 \kern-\dimen@\fi
		\else
		\overline{\rel@kern{-0.6}\kern\dimen@#1}%
		\fi
	}%
	\macc@depth\@ne
	\let\math@bgroup\@empty \let\math@egroup\macc@set@skewchar
	\mathsurround\z@ \frozen@everymath{\mathgroup\macc@group\relax}%
	\macc@set@skewchar\relax
	\let\mathaccentV\macc@nested@a
	\if#31
	\macc@nested@a\relax111{#1}%
	\else
	\def\gobble@till@marker##1\endmarker{}%
	\futurelet\first@char\gobble@till@marker#1\endmarker
	\ifcat\noexpand\first@char A\else
	\def\first@char{}%
	\fi
	\macc@nested@a\relax111{\first@char}%
	\fi
	\endgroup
}
\newcommand{\PSh}{\mathrm{PSh}}
\newcommand{\Sh}{\mathrm{Sh}}
\newcommand{\infcat}{$\infty$-category\xspace}
\newcommand{\inftopos}{$\infty$-topos\xspace}
\newcommand{\inftopoi}{$\infty$-topoi\xspace}
\newcommand{\rSet}{\mathrm{Set}}
\newcommand{\Ab}{\mathrm{Ab}}
\newcommand{\DAb}{{\cD(\Ab)}}
\newcommand{\tauan}{\tau_\mathrm{an}}
\newcommand{\Mod}{\textrm{-}\mathrm{Mod}}
\newcommand{\Coh}{\mathrm{Coh}}
\newcommand{\Cohb}{\mathrm{Coh}^\mathrm{b}}
\newcommand{\Cohh}{\mathrm{Coh}^\heartsuit}
\newcommand{\St}{\mathrm{St}}
\newcommand{\Stn}{\mathrm{Stn}_{\mathbb C}}
\newcommand{\Sch}{\mathrm{Sch}}
\newcommand{\Top}{\mathcal T\mathrm{op}}
\newcommand{\dAnc}{\mathrm{dAn}_{\mathbb C}}
\newcommand{\cTan}{\cT_{\mathrm{an}}}
\newcommand{\cTank}{\cT_{\mathrm{an}}(k)}
\newcommand{\cTdisc}{\cT_{\mathrm{disc}}}
\newcommand{\cTet}{\cT_{\mathrm{\acute{e}t}}}
\newcommand{\Str}{\mathrm{Str}}
\newcommand{\Strloc}{\mathrm{Str}^\mathrm{loc}}
\newcommand{\RTop}{\tensor*[^\rR]{\Top}{}}
\newcommand{\dStn}{\mathrm{dStn}_{\mathbb C}}
\newcommand{\trunc}{\mathrm{t}_0}
\newcommand{\CRing}{\mathrm{CRing}}
\newcommand{\Anc}{\mathrm{An}_{\mathbb C}}
\newcommand{\fib}{\mathrm{fib}}
\newcommand{\anL}{\mathbb L\an}
\newcommand{\AnRing}{\mathrm{AnRing}}
\newcommand{\cHom}{\cH \mathrm{om}}
\newcommand{\llb}{[\![}
\newcommand{\rrb}{]\!]}
\newcommand{\an}{^\mathrm{an}}
\newcommand{\alg}{^\mathrm{alg}}
\newcommand{\et}{_\mathrm{\acute{e}t}}
\newcommand{\ev}{\mathrm{ev}}
\newcommand{\inv}{^{-1}}
\newcommand{\kanal}{$k$-analytic\xspace}
\newcommand{\op}{^\mathrm{op}}
\newcommand{\DM}{Deligne-Mumford\xspace}
\tikzset{
  closed/.style = {decoration = {markings, mark = at position 0.5 with { \node[transform shape, xscale = .8, yscale=.4] {/}; } }, postaction = {decorate} },
  open/.style = {decoration = {markings, mark = at position 0.5 with { \node[transform shape, scale = .7] {$\circ$}; } }, postaction = {decorate} }
}
\DeclareMathOperator{\Fun}{Fun}
\DeclareMathOperator{\Hom}{Hom}
\DeclareMathOperator{\Map}{Map}
\DeclareMathOperator{\Sp}{Sp}
\DeclareMathOperator{\Spec}{Spec}
\DeclareMathOperator*{\colim}{colim}
\begin{document}
\title{GAGA theorems in derived complex geometry}

\author{Mauro PORTA}
\address{Mauro PORTA, Institut de Recherche Mathématique Avancée, 7 rue René Descartes, 67000 Strasbourg, France}
\email{porta@math.unistra.fr}

\date{\today}
\subjclass[2010]{Primary 14D23; Secondary 14G22, 32G13, 14A20, 18G55}
\keywords{representability, deformation theory, analytic cotangent complex, derived geometry, rigid analytic geometry, complex geometry, derived stacks}

\begin{abstract}
	In this paper, we expand the foundations of derived complex analytic geometry introduced in \cite{DAG-IX}.
	We start by studying the analytification functor and its properties. In particular, we prove that for a derived complex scheme locally almost of finite presentation $X$, the canonical map $X\an \to X$ is flat in the derived sense.
	Next, we provide a comparison result relating derived complex analytic spaces to geometric stacks.
	Using these results and building on the previous work \cite{Porta_Yu_Higher_analytic_stacks_2014}, we prove a derived version of the GAGA theorems.
	As an application, we prove that the infinitesimal deformation theory of a derived complex analytic moduli problem is governed by a differential graded Lie algebra.
\end{abstract}

\maketitle

\personal{PERSONAL COMMENTS ARE SHOWN!!!}

\tableofcontents

\section{Introduction} \label{sec:introduction}

In recent years derived analytic geometry has received much attention.
First introduced in \cite{DAG-IX} and in the author's Ph.D.\ thesis, applications to mirror symmetry \cite{Porta_Yu_DNAnG_I,Porta_Yu_Representability} and to (nonabelian) Hodge theory \cite{Di_Natale_Period,Porta_RH} started to appear.
Both these applications make an essential use of derived analytic geometry.

Concerning mirror symmetry, it is derived \emph{non-archimedean} analytic geometry that plays a central role.
After the pioneering work of M.\ Kontsevich and Y.\ Soibelman \cite{Kontsevich_Homological_2001}, it has become understood that non-archimedean enumerative geometry is deeply related to mirror symmetry.
Derived non-archimedean analytic geometry is needed to construct quasi-smooth derived enhancement of the non-archimedean moduli space of stable maps (introduced in the non-archimedean setting by T.\ Y.\ Yu in \cite{Yu_Gromov_2014}).

In the case of nonabelian Hodge theory, derived analytic geometry is needed because it allows on one side to use the usual analytic topology and on the other hand it gives a better control on deformation theory.
The possibility of using the analytic topology has been exploited in \cite{Porta_RH} to prove a derived version of Deligne's Riemann-Hilbert correspondence.
As sketched in \cite{Simpson_Geometricity_2009}, this is the first step in the construction of Deligne-Hitchin's twistor space and in the study of its deformation theory, which is conjecturally governed by mixed Hodge modules.

One of the direct applications of the results in this paper is that the local structure of an analytic moduli problem is governed by differential graded Lie algebras (see \cref{cor:analytic_FMP}).
This relies on derived versions of the GAGA theorems, that constitute the main results of this work.
In more precise terms, we prove that analytic formal moduli problems are equivalent to algebraic formal moduli problems.
This result plays therefore a crucial role in the nonabelian Hodge theory program introduced by C.\ Simpson in \cite{Simpson_Geometricity_2009}. \\

\paragraph{\textbf{Main results}}

The first result of this work is to provide an alternative description of derived analytic spaces in terms of the functor of points.
We refer to \cref{sec:review} for a review of the notion of derived analytic space.
For the moment, let us just recall that a derived analytic space is a pair $X = (\cX, \cO_X)$, where $\cX$ is an $\infty$-topos that is locally a topological space and $\cO_X$ is a sheaf of \emph{analytic rings} on $\cX$.
Informally speaking, an analytic ring is a simplicial commutative ring equipped with an \emph{axiomatic holomorphic functional calculus}.
This is made precise through the language of pregeometries introduced in \cite{DAG-V}.
In particular, every analytic ring $A$ has an underlying simplicial commutative ring, denoted $A\alg$.

Locally, every derived analytic space $X$ has an underlying (uniquely determined) classical analytic space.
We say that $X$ is a derived Stein space if it has an underlying classical analytic space which is Stein.
We can thus state the first main result:

\begin{thm-intro}[cf.\ {\cref{thm:functor_of_points}}] \label{thm-intro:functor_of_points}
	Let $\dAnc$ denote the $\infty$-category of derived analytic spaces and let $\dStn$ be the full subcategory spanned by derived Stein spaces.
	Then, there is a fully faithful embedding
	\[ \dAnc \hookrightarrow \St(\dStn, \tauan) , \]
	where $\tauan$ is the derived version of the usual analytic topology introduced in \cref{subsec:geometric_context}.
	Furthermore, this embedding restricts to an equivalence between the full subcategory of $\dAnc$ spanned by those derived analytic spaces $X = (\cX, \cO_X)$ such that $\cX$ is $n$-localic on one side, and $n$-truncated analytic \DM stacks on the other side.
\end{thm-intro}

In Sections \ref{sec:analytification} and \ref{sec:local_theory} we study the derived analytification functor.
We can summarize the main results as follows:

\begin{thm-intro}[cf.\ {\cref{thm:derived_analytification} and \cref{cor:analytification_flat}}]
	Let $X \coloneqq (\cX, \cO_X)$ be a derived \DM stack locally almost of finite presentation over $\mathbb C$.
	Then the derived analytification $X\an \coloneqq (\cX\an, \cO_{X\an}) \in \dAnc$ exists, and furthermore:
	\begin{enumerate}
		\item when $X$ is an underived scheme locally of finite presentation over $\mathbb C$, its derived analytification coincides with the classical one of Grothendieck -- Serre \cite[Expos\'e XII]{SGA1};
		\item the canonical map $(\cX\an, \cO_{X\an}\alg) \to (\cX, \cO_X)$ is flat in the derived sense.
	\end{enumerate}
\end{thm-intro}

The key step in the proof of this theorem is the careful study of the $\infty$-category of analytic rings that can be found in \cref{sec:local_theory}.
In this part, we need some general facts about pregeometries that did not appear in \cite{DAG-V}.
We collected these needed results in the appendix.

In Sections \ref{sec:Grauert} and \ref{sec:GAGA} we approach the main results of this paper.
\cref{thm-intro:functor_of_points} allows us to introduce the notion of derived analytic Artin stack.
We can therefore use the notion of proper map of underived analytic Artin stacks studied in \cite{Porta_Yu_Higher_analytic_stacks_2014} to formulate and prove the following derived versions of Grauert and GAGA theorems:

\begin{thm-intro}[cf.\ {Theorems \ref{thm:Grauert_theorem}, \ref{thm:GAGA1} and \ref{thm:GAGA2}}]
	Say that a morphism of derived (algebraic or analytic) Artin stacks $f \colon X \to Y$ is proper if and only if its truncation $\trunc(f)$ is proper.
	Then:
	\begin{enumerate}
		\item if $f \colon X \to Y$ is a proper morphism of derived analytic Artin stacks, then the derived pushforward induces a functor of stable $\infty$-categories (written in \emph{homological} notation):
		\[ f_* \colon \Coh^-(X) \to \Coh^-(Y) . \]
		
		\item Let $f \colon X \to Y$ be a proper morphism of derived algebraic Artin stacks, locally almost of finite presentation over $\mathbb C$.
		Then the diagram
		\[ \begin{tikzcd}
			\Coh^-(X) \arrow{r}{(-)\an} \arrow{d}{f_*} & \Coh^-(X\an) \arrow{d}{f\an_*} \\
			\Coh^-(Y) \arrow{r}{(-)\an} & \Coh^-(Y\an)
		\end{tikzcd} \]
		commutes.
		
		\item Let $X$ be a derived algebraic Artin stack, which is proper over $\mathbb C$.
		Then the analytification functor induces an equivalence between the unbounded stable $\infty$-categories
		\[ \Coh(X) \simeq \Coh^-(X\an) \]
		of coherent sheaves.
	\end{enumerate}
\end{thm-intro}

Finally, in \cref{sec:applications} we provide two applications of these results:

\begin{thm-intro}[cf.\ {\cref{prop:algebraizable} and \cref{cor:analytic_FMP}}]
	Let $X$ be a derived analytic space.
	Then:
	\begin{enumerate}
		\item if $X$ is proper over $\mathbb C$, then $X$ is algebraizable if and only if $\trunc(X)$ is algebraizable;
		\item let $x$ be a point of $X$. Then the shifted tangent complex $\mathbb T_xX[-1]$ admits a differential graded Lie algebra structure.
	\end{enumerate}
\end{thm-intro}

\bigskip
\paragraph{\textbf{Relation to other works}}
This is the first paper of the series \cite{Porta_Comparison_2017,Porta_Yu_DNAnG_I,Porta_Yu_Representability,Porta_RH}.
In revising it, I tried to use notations compatible with the conventions of those other papers.
Nevertheless, this paper remains essentially autonomous and self contained, with two exceptions: on one hand we use occasionally some results concerning $\infty$-categories and geometric stacks that appeared as general statements in \cite{Porta_Comparison_2017,Porta_Yu_DNAnG_I}.
On the other hand in the final section on applications of the GAGA theorem, we also rely on the properties of the analytic cotangent complex as introduced in the author's thesis (and that appeared in \cite{Porta_Yu_Representability}).

The results of Sections \ref{sec:functor_of_points} and \ref{sec:analytification} have non-archimedean counterparts that have been studied in \cite{Porta_Yu_DNAnG_I,Porta_Yu_Representability}.
The proofs we provide here for the complex analytic setting are different and more streamlined.

The debt to the work of J.\ Lurie should be evident to everyone.
As privately communicated, parts of these results exposed here will appear in the forthcoming book \cite{Lurie_SAG}.
They were achieved independently, and therefore many proofs differ.
Furthermore, the study of the case of Artin stacks (relying on \cite{Porta_Yu_Higher_analytic_stacks_2014}) is exclusive to this work.

Finally, a different approach to derived analytic geometry has been suggested by O.\ Ben-Bassat and K.\ Kremnizer \cite{Bambozzi_Dagger_2015,Bambozzi_BenBassat_Stein,Ben-Bassat_Non-archimedean_2013,Ben-Bassat_Perspective}. \\

\paragraph{\textbf{Notations and conventions}}

We use the language of $\infty$-categories.
We refer to \cite{HTT,Lurie_Higher_algebra} for the foundations and the terminology.
We reserve the notation $\cS$ to denote the $\infty$-category of spaces (that is, the underlying $\infty$-category of simplicial sets, endowed with the Kan model structure).
All functors are $\infty$-functors unless otherwise specified.
We refer to \cite[\S 2]{Porta_Yu_Higher_analytic_stacks_2014} for the terminology and the basic operations on $\infty$-Grothendieck sites.
Given an $\infty$-Grothendieck site $(\cC, \tau)$, we denote by $\Sh(\cC, \tau)$ the $\infty$-category of $\cS$-valued sheaves and by $\St(\cC, \tau)$ the full subcategory of $\Sh(\cC, \tau)$ spanned by hypercomplete sheaves.
We refer to a hypercomplete sheaf on $(\cC, \tau)$ as a \emph{stack}.

In this paper we are working in the setting of complex analytic geometry. Whenever the word ``analytic'' is used, it should be understood as ``complex analytic''.
We denote by $\bbA^n_{\mathbb C}$ the algebraic $n$-dimensional affine space and by $\bA^n_{\mathbb C}$ the analytic one.

\bigskip
\paragraph{\textbf{Acknowledgments}}

I am thankful to J.\ Lurie, V.\ Melani, M.\ Robalo, C.\ Simpson, B.\ To\"en, G.\ Vezzosi and T.\ Y.\ Yu for many stimulating discussions.
I would also like to thank V.\ F.\ Zenobi for introducing me to the idea of holomorphic functional calculus.
This research has been partially supported by the Simons Foundation grant number 347070.

\section{Review of derived complex analytic spaces} \label{sec:review}

We follow the ideas introduced in \cite{DAG-V,DAG-IX}.
First, we recall the notions of pregeometry and structured topos introduced by Lurie in \cite{DAG-V}.

\begin{defin}[{\cite[3.1.1]{DAG-V}}]
	A \emph{pregeometry} is an \infcat $\cT$ equipped with a class of \emph{admissible} morphisms and a Grothendieck topology generated by admissible morphisms, satisfying the following conditions:
	\begin{enumerate}[(i)]
		\item The \infcat $\cT$ admits finite products.
		\item The pullback of an admissible morphism along any morphism exists, and is again admissible.
		\item For morphisms $f,g$, if $g$ and $g\circ f$ are admissible, then $f$ is admissible.
		\item Every retract of an admissible morphism is admissible.
	\end{enumerate}
\end{defin}

\begin{defin}[{\cite[3.1.4]{DAG-V}}] \label{def:structure}
	Let $\cT$ be a pregeometry, and let $\cX$ be an \inftopos.
	A \emph{$\cT$-structure} on $\cX$ is a functor $\cO\colon\cT\to\cX$ with the following properties:
	\begin{enumerate}[(i)]
		\item The functor $\cO$ preserves finite products.
		\item Suppose given a pullback diagram
		\[
		\begin{tikzcd}
		U' \arrow{r} \arrow{d} & U \arrow{d}{f} \\
		X' \arrow{r} & X
		\end{tikzcd}
		\]
		in $\cT$, where $f$ is admissible.
		Then the induced diagram
		\[
		\begin{tikzcd}
		\cO(U') \arrow{r} \arrow{d} & \cO(U) \arrow{d} \\
		\cO(X') \arrow{r} & \cO(X)
		\end{tikzcd}
		\]
		is a pullback square in $\cX$.
		\item Let $\{U_\alpha\to X\}$ be a covering in $\cT$ consisting of admissible morphisms.
		Then the induced map
		\[\coprod_\alpha\cO(U_\alpha)\to\cO(X)\]
		is an effective epimorphism in $\cX$.
		\personal{This condition implies in particular that the germs of $\cO$ are local rings, local with respect to the topology generated by admissible morphisms.}
	\end{enumerate}
	We denote by $\Str_\cT(\cX)$ the \infcat of $\cT$-structures on $\cX$ and natural transformations between them.

	A morphism of $\cT$-structures $\cO\to\cO'$ on $\cX$ is \emph{local} if for every admissible morphism $U\to X$ in $\cT$, the resulting diagram
	\[ \begin{tikzcd}
	\cO(U) \arrow{r} \arrow{d} & \cO'(U) \arrow{d} \\
	\cO(X) \arrow{r} & \cO'(X)
	\end{tikzcd} \]
	is a pullback square in $\cX$.
	We denote by $\Strloc_\cT(\cX)$ the \infcat of $\cT$-structures on $\cX$ with local morphisms.
	
	A \emph{$\cT$-structured \inftopos} $X$ is a pair $(\cX,\cO_X)$ consisting of an \inftopos $\cX$ and a $\cT$-structure $\cO_X$ on $\cX$.
	We denote by $\RTop(\cT)$ the \infcat of $\cT$-structured \inftopoi (cf.\ \cite[Definition 1.4.8]{DAG-V}).
	Note that a 1-morphism $f\colon (\cX, \cO_X) \to (\cY, \cO_Y)$ in $\RTop(\cT)$ consists of a geometric morphism of \inftopoi $f_*\colon\cX\rightleftarrows\cY\colon f\inv$ and a local morphism of $\cT$-structures $f^\sharp \colon f\inv \cO_Y \to \cO_X$.
\end{defin}

Let $k$ denote either the field $\C$ of complex numbers or a non-archimedean field with nontrivial valuation.
We introduce three pregeometries $\cTan$, $\cTdisc$ and $\cTet$ that are relevant to derived complex analytic geometry.

The pregeometry $\cTan$ is defined as follows:
\begin{enumerate}[(i)]
	\item The underlying category of $\cTan$ is the category of open subsets of $\mathbb C^n$, for some $n \ge 0$;
	\item A morphism in $\cTan$ is admissible if and only if it is a local biholomorphism;
	\item The topology on $\cTan$ is the analytic topology.
\end{enumerate}
Given an $\infty$-topos $\cX$, we refer to $\cTan$-structures as \emph{analytic rings} and we write $\AnRing_{\mathbb C}(\cX)$ instead of $\Strloc_{\cTan}(\cX)$.

The pregeometry $\cTdisc$ is defined as follows:
\begin{enumerate}[(i)]
	\item The underlying category of $\cTdisc$ is the full subcategory of the category of $\mathbb C$-schemes spanned by affine spaces $\mathbb A^n_{\mathbb C}$;
	\item A morphism in $\cTdisc$ is admissible if and only if it is an isomorphism;
	\item The topology on $\cTdisc$ is the trivial topology, i.e.\ a collection of admissible morphisms is a covering if and only if it is nonempty.
\end{enumerate}
Given an $\infty$-topos $\cX$, we refer to $\cTdisc$-structures as \emph{derived rings} (or simplicial commutative rings) and we write $\CRing_{\mathbb C}(\cX)$ instead of $\Strloc_{\cTdisc}(\cX)$.

The pregeometry $\cTet$ is defined as follows:
\begin{enumerate}[(i)]
	\item The underlying category of $\cTet$ is the full subcategory of the category of complex schemes spanned by smooth affine schemes;
	\item A morphism in $\cTet$ is admissible if and only if it is étale;
	\item The topology on $\cTet$ is the étale topology.
\end{enumerate}

\begin{eg} \label{eg:Tdisc}
	In the case of $\cTdisc$ it is easy to give an interpretation of $\cTdisc$-structures.
	Indeed, the rectification result \cite[5.5.9.3]{HTT} allows to identify $\Str_{\cTdisc}(\cS)$ with the $\infty$-category of simplicial $\mathbb C$-algebras.
	See \cite[Remark 4.1.2]{DAG-V} for more details.
\end{eg}

\begin{eg}
	Let $X$ be an a complex analytic space.
	Let $\cX \coloneqq \Sh(X)$, the $\infty$-topos of sheaves of spaces over the underlying topological space of $X$.
	We define a $\cTan$-structure $\cO_X$ on $\cX$ as follows. Let $\mathrm{Op}(X)$ be the poset of open subsets of $X$.
	Then we define a functor
	\[ \cTan \times \mathrm{Op}(X)\op \to \cS \]
	by
	\[ (U,V) \mapsto \Hom_{\Anc}(V, U) . \]
	This determines a functor $\cTan \to \PSh(X)$ that is easily seen to factor through $\Sh(X)$ and to satisfy conditions (i), (ii) and (iii) of \cref{def:structure}.
\end{eg}

The classical analytification functor of \cite[Expos\'e XII]{SGA1} produces a transformation of pregeometries
\[ (-)\an \colon \cTdisc \to \cTan . \]
For any $\infty$-topos $\cX$, it induces a functor
\[ (-)\alg \colon \AnRing_{\mathbb C}(\cX) \to \CRing_{\mathbb C}(\cX) . \]
We refer to $(-)\alg$ as the \emph{underlying algebra functor}.

\begin{defin} \label{def:derived_analytic_space}
	A \emph{derived complex analytic space} $X$ is a $\cTan$-structured \inftopos $(\cX,\cO_X)$ such that there exists an effective epimorphism from $\coprod_i U_i$ to a final object of $\cX$ satisfying the following conditions, for every index $i$:
	\begin{enumerate}[(i)]
		\item The pair $(\cX_{/U_i}, \pi_0(\cO\alg_\cX | U_i))$ is equivalent to the ringed \inftopos associated to the étale site on a \kanal space $X_i$.
		\item For each $j\ge 0$, $\pi_j(\cO\alg_\cX | U_i)$ is a coherent sheaf of $\pi_0(\cO\alg_\cX | U_i)$-modules on $X_i$.
	\end{enumerate}
	We denote by $\dAnc$ the full subcategory of $\RTop(\cTan)$ spanned by derived complex analytic spaces.
\end{defin}

We can summarize the main results of \cite[\S 12]{DAG-IX} in the following theorem:

\begin{thm}[J.\ Lurie]
	The $\infty$-category $\dAnc$ admits fiber products and it contains as a full subcategory the ordinary category of complex analytic spaces $\Anc$.
	Furthermore, a derived complex analytic space $X = (\cX, \cO_X)$ belongs to $\Anc$ if and only if the $\infty$-topos $\cX$ is $0$-localic and the structure sheaf $\cO_X$ is discrete.
\end{thm}

\section{The functor of points} \label{sec:functor_of_points}

In \cref{def:derived_analytic_space} a derived analytic space is in particular a $\cTan$-structured topos.
This perspective is useful, but it has one main drawback: at best it allows us to deal with derived orbifolds.
On the other hand, it is often useful to consider more general geometric objects, such as derived Artin stacks.
For this reason, in this section we provide a functor of point approach to derived analytic spaces.
We follow the conventions of \cite[\S 2]{Porta_Yu_Higher_analytic_stacks_2014} for what concerns geometric contexts and geometric stacks. \\

Before introducing the geometric context, let us prove the following useful result:

\begin{lem} \label{lem:hypercomplete_underlying_topos}
	Let $X \coloneqq (\cX, \cO_X)$ be a derived complex analytic space.
	Then the $\infty$-topos $\cX$ is hypercomplete.
\end{lem}

\begin{proof}
	Using \cite[6.5.2.21 and 6.5.2.22]{HTT} we see that the condition of being hypercomplete can be tested locally on $\cX$.
	In particular, we can suppose that $\cX$ is $0$-localic and that $(\cX, \pi_0(\cO_X\alg))$ is an usual analytic space.
	In this case, $\cX$ is identified with the $\infty$-topos of sheaves on a locally compact Hausdorff space of finite covering dimension.
	In particular, its homotopy dimension is finite, and hence $\cX$ is hypercomplete (see \cite[7.2.1.12]{HTT}).
\end{proof}

\subsection{The geometric context} \label{subsec:geometric_context}

We start by introducing the notion of derived Stein space.

\begin{defin}
	Let $X \coloneqq (\cX, \cO_X)$ be a derived analytic space.
	We say that $X$ is a \emph{derived Stein space} if the truncation $\trunc(X) \coloneqq (\cX, \pi_0(\cO_X\alg))$ is an usual Stein space.
	We denote by $\dStn$ the full subcategory of $\dAnc$ spanned by derived Stein spaces.
\end{defin}

We next introduce the notion of \'etale and smooth morphism of derived analytic spaces.

\begin{defin} \label{def:differential_properties}
	Let $X \coloneqq (\cX, \cO_X)$ and $Y \coloneqq (\cY, \cO_Y)$ be derived analytic spaces.
	We say that a morphism $f \colon X \to Y$ is:
	\begin{enumerate}
		\item \emph{strong} if the induced morphism $f\inv \cO_Y\alg \to \cO_X\alg$ is strong, i.e.\ if it induces isomorphisms
		\[ \pi_i( f\inv \cO_Y\alg ) \simeq \pi_i( \cO_X\alg ) \otimes_{\pi_0 ( \cO_X\alg )} \pi_0( f\inv \cO_Y\alg ) , \]
		\item \emph{\'etale} if it is strong and, locally on $X$ and $Y$, the truncation $\trunc(f)$ is \'etale,
		\item \emph{smooth} if it is strong and the truncation $\trunc(f)$ is smooth.
	\end{enumerate}
\end{defin}

\begin{lem}
	A morphism $f \colon X \to Y$ of derived analytic spaces is \'etale if and only if it satisfies the following two conditions:
	\begin{enumerate}
		\item the underlying morphism of $\infty$-topoi $f_* \colon \cX \leftrightarrows \cY \colon f\inv$ is an \'etale morphism of $\infty$-topoi (see \cite[\S 6.3.5]{HTT}), and
		\item the induced morphism $f\inv \cO_Y \to \cO_X$ is an equivalence.
	\end{enumerate}
\end{lem}

\begin{proof}
	Suppose first that $f$ is \'etale. Then locally on $\cX$ and $\cY$, the geometric morphism $f_*$ is an \'etale morphism of $\infty$-topoi.
	It follows from the descent theory of $\infty$-topoi that $f_*$ is \'etale.
	Moreover, the map $f^\sharp \colon f\inv \cO_Y \to \cO_X$ induces an equivalence
	\[ f\inv ( \pi_0(\cO_Y\alg) ) \xrightarrow{\sim} \pi_0(\cO_X\alg) . \]
	Since the morphism is strong, we deduce that $f^\sharp$ induces isomorphisms
	\[ \pi_i(f\inv \cO_Y\alg) \simeq \pi_i(\cO_X\alg) . \]
	Since the underlying $\infty$-topos of $X$ is hypercomplete by \cref{lem:hypercomplete_underlying_topos} and since the underlying algebra functor $(-)\alg$ is conservative, we conclude that $f^\sharp$ was an equivalence to start with.
	
	Vice-versa, suppose that the above two conditions are met. Then in particular $f$ induces an equivalence
	\[ f\inv ( \pi_0\cO_Y ) \simeq \pi_0(\cO_X) . \]
	This implies that $\trunc(f)$ is locally on $X$ and $Y$ an open immersion. In other words, $f$ is \'etale.
\end{proof}

\begin{rem}
	It is possible to provide more conceptual characterization of smooth morphisms.
	Nevertheless, this requires the use of the \emph{analytic cotangent complex}.
	We refer to \cite[Proposition 5.48]{Porta_Yu_Representability} for a precise formulation.
	In this paper, we will not need the alternative characterizations of smooth morphisms provided in loc.\ cit.
\end{rem}

The notion of \'etale morphism of derived analytic spaces gives rise to a Grothendieck topology on $\dAnc$ that restricts to a Grothendieck topology on $\dStn$.\footnote{The only non trivial point is to show that the intersection of derived Stein spaces is again Stein. This can be checked on truncations, where it is a direct consequence of \cite[\S 1.4.4]{Grauert_Coherent_1984}.
	\personal{Actually, this reference only deals with finite intersections. See Proposition 6.2.7 in A.\ Kaneko, \emph{Introduction to the theory of hyperfunctions} for the general case}}
We write $\tauan$ to denote this topology and we refer to it as the \emph{analytic topology}.

\begin{lem} \label{lem:subcanonical}
	The topology $\tauan$ on $\dStn$ is hyper-subcanonical.
\end{lem}

\begin{proof}
	Let $X = (\cX, \cO_X) \in \dStn$ and let $U_\bullet \to X$ be a $\tauan$-hypercovering.
	We have to prove that there is an equivalence
	\[ | U_\bullet | \simeq X , \]
	where $|-|$ denotes the geometric realization.
	For $n \in \mathbf \Delta$, let us represent the derived analytic space $U_n$ as $U_n = (\cX_n, \cO_{U_n})$.
	Using the fact that the inclusion $\AnRing_{\mathbb C}(\cX) \hookrightarrow \Fun(\cTan, \cX)$ is conservative and commutes with limits, we are therefore reduced to prove the following two statements:
	\begin{enumerate}[(i)]
		\item one has $| \cX_\bullet | \simeq \cX$ in $\RTop$;
		\item one has
		\[ \cO_X \simeq \lim_{n \in \mathbf \Delta} i_{n*} \cO_{U_n} \]
		in the $\infty$-category $\Fun(\cTan, \cX)$.
	\end{enumerate}
	The first statement follows from the fact that $\cX$ is hypercomplete and from the descent theory of $\infty$-topoi (see \cite[6.1.3.9(3)]{HTT}).
	The second statement follows from the fact that limits in $\Fun(\cTan,\cX)$ can be computed objectwise and from descent theory of $\infty$-topoi.
\end{proof}

\begin{lem}
	The inclusion of sites $i \colon (\dStn, \tauan) \hookrightarrow (\dAnc, \tauan)$ induces an equivalence
	\[ \St( \dStn, \tauan ) \simeq \St( \dAnc, \tauan ) . \]
\end{lem}

\begin{proof}
	It is enough to prove that the conditions of \cite[Proposition 2.22]{Porta_Yu_Higher_analytic_stacks_2014} are met.
	For this, it is enough to prove that every derived analytic space admits a covering made of derived Stein spaces.
	Observe that there is an equivalence of $\infty$-categories between the $\infty$-category of \'etale maps to $X$ and the one of \'etale maps of $\trunc(X)$.
	Since the property of being Stein only depends on the truncation, we are reduced to prove the above statement for ordinary analytic spaces, where it is well known.
\end{proof}

We now consider the restricted Yoneda embedding, defined as the composition
\[ \begin{tikzcd} [column sep = small]
	\dAnc \arrow{r}{j} & \St(\dAnc, \tauan) \arrow{r}{i_s}[swap]{\sim} & \St( \dStn, \tauan ) .
\end{tikzcd} \]
We denote this composition by $\phi$.
The above results imply:

\begin{cor}
	The functor
	\[ \phi \colon \dAnc \to \St(\dStn, \tauan) \]
	is fully faithful.
\end{cor}

\subsection{Geometric stacks}

Our next goal is to describe the essential image of $\phi$.
We start by introducing some terminology.
We let $\bP\et$ denote the collection of \'etale morphisms in $\dStn$.
Then, $(\dStn, \tauan, \bP\et)$ is a geometric context in the sense of \cite[Definition 2.2]{Porta_Yu_Higher_analytic_stacks_2014}.
We refer to geometric stacks with respect to this context as \emph{derived analytic \DM stacks}.
We denote by $\mathrm{DM}$ the full subcategory of $\St(\dStn, \tauan)$ spanned by derived analytic \DM stacks.

\begin{defin}
	Let $F \in \St(\dStn, \tauan)$.
	We say that $F$ is \emph{$n$-truncated} if for every ordinary Stein space $X \in \dStn$, the space $F(X) \in \cS$ is $n$-truncated.
	We denote by $\mathrm{DM}_n$ the full subcategory of $\mathrm{DM}$ spanned by $n$-truncated derived analytic \DM stacks.
\end{defin}

On the other hand, we have:

\begin{notation}
	We let $\dAnc^{\le n}$ denote the full subcategory of $\dAnc$ spanned by those derived analytic spaces $X = (\cX, \cO_X)$ whose underlying $\infty$-topos $\cX$ is $n$-localic.
\end{notation}

The following theorem is the complex analytic version of \cite[Theorem 1.7]{Porta_Comparison_2017} (in the algebraic setting) and of \cite[Theorem 7.9]{Porta_Yu_DNAnG_I} (in the non-archimedean setting).

\begin{thm} \label{thm:functor_of_points}
	For every $n \ge 0$, the functor $\phi \colon \dAnc \to \St(\dStn, \tauan)$ restricts to an equivalence
	\[ \dAnc^{\le n} \simeq \mathrm{DM}_n . \]
\end{thm}

Since $\phi$ is fully faithful, it is enough to prove the following two statements:
\begin{enumerate}
	\item if $X = (\cX, \cO_X) \in \dAnc^{\le n}$, then $\phi(X)$ belongs to $\mathrm{DM}_n$, and
	\item the induced functor $\dAnc^{\le n} \to \mathrm{DM}_n$ is essentially surjective.
\end{enumerate}
We start by proving statement (1).

\begin{lem} \label{lem:functor_points_atlas}
	Let $n \ge 0$ and let $X = (\cX, \cO_X) \in \dAnc^{\le n}$.
	Then $\phi(X)$ is $n$-truncated and admits an $n$-atlas.
\end{lem}

\begin{proof}
	Let $Y \coloneqq (\cY, \cO_Y)$ be an ordinary Stein space.
	We have to prove that $\Map_{\dAnc}(Y, X)$ is $n$-truncated.
	Consider the canonical map
	\[ p \colon \Map_{\dAnc}(Y, X) \longrightarrow \Map_{\RTop}(\cY, \cX) . \]
	Recall from \cite[Lemma 2.2]{Porta_Comparison_2017} that the $\infty$-category of $n$-localic topoi is categorically equivalent to an $(n+1)$-category.
	In particular, $\Map_{\RTop}(\cY, \cX)$ is $n$-truncated.
	As a consequence, it is enough to prove that for any geometric morphism $f_* \colon \cY \leftrightarrows \cX \colon f\inv$ the fiber of $p$ at $f_*$ is $n$-truncated.
	We can identify this fiber with the space
	\[ \Map_{\AnRing_{\mathbb C}(\cX)}(f\inv \cO_X, \cO_Y) . \]
	Since $\cO_Y$ is discrete, we see that this space is $0$-truncated.
	In conclusion, $\Map_{\dAnc}(Y,X)$ is $n$-truncated.
	It follows that $\phi(X)$ is $n$-truncated.
	
	We now prove that $\phi(X)$ admits an $n$-atlas.
	We proceed by induction on $n$.
	When $n = 0$, we deal first with the case where the underlying topological space of $X$ is Hausdorff.
	In this case, we can find an \emph{open} covering $\{U_i\}$ of $X$ such that each $U_i$ is a derived Stein space.
	It is then enough to prove that the induced morphisms
	\[ \phi(U_i) \to \phi(X) \]
	are representable by derived Stein spaces.
	In other words, we have to prove that for any derived Stein space $Y$ and any map $\phi(Y) \to \phi(X)$ there exists a derived Stein space $V_i$ such that
	\[ \phi(V_i) \simeq \phi(U_i) \times_{\phi(X)} \phi(Y) . \]
	Since $\phi$ commutes with limits and it is fully faithful, we are reduced to prove that the fiber product $U_i \times_X Y$ is a derived Stein space.
	This statement can be checked on truncations.
	Since $\trunc(U_i)$ and $\trunc(Y)$ are Stein spaces, the conclusion follows from \cite[\S 1.4.4]{Grauert_Coherent_1984} (here we are using the Hausdorff assumption).
	
	Notice that above we proved that the morphisms $\phi(U_i) \to \phi(X)$ are $(-1)$-representable.
	If the underlying topological space of $X$ is not Hausdorff, we first find an open covering $\{U_i\}$ of $X$ made of derived Stein spaces whose underlying topological space is Hausdorff. Then for any derived Stein space $Y$ and any morphism $Y \to X$, the fiber product $Y \times_X U_i$ is an open subspace of $Y$ and it is therefore Hausdorff.
	It follows that in this case $\phi(U_i) \to \phi(X)$ is $0$-representable.
	
	Assume now $n > 0$.
	Choose an \'etale covering $\{U_i\}$ of $X$ such that each $U_i$ is a derived Stein space.
	Applying the induction hypothesis and reasoning as above, we are reduced to prove that for any derived Stein space $Y$ and any morphism $Y \to X$, the fiber product $U_i \times_Y X$ belongs to $\dAnc^{\le n-1}$.
	Using \cite[Lemma 2.3.16]{DAG-V} and \cite[6.3.5.8]{HTT}, we are reduced to prove that the underlying $\infty$-topos of $U_i$ can be written in the form $\cX_{/V_i}$, where $V_i \in \cX$ is an $(n-1)$-truncated object.
	Using the universal property of \'etale morphisms (see \cite[Remark 2.3.4]{DAG-V}), we reduce ourselves to prove that for any ordinary Stein space $Y$, the fibers of
	\[ \Map_{\dAnc}(Y, U_i) \to \Map_{\dAnc}(Y, X) \]
	are $(n-1)$-truncated.
	Since $\Map_{\dAnc}(Y,X)$ is $n$-truncated and $\Map_{\dAnc}(Y, U_i)$ is $0$-truncated, the conclusion follows from the long exact sequence of homotopy groups.
\end{proof}

To complete the proof of statement (1), we are therefore left to prove that the diagonal of $\phi(X)$ is $m$-representable for some $m$.
Rather than do it directly, it is easier to consider the geometric context $(\dAnc^{\le 0}, \tauan, \bP\et)$.
The inclusion
\[ i \colon (\dStn, \tauan) \hookrightarrow (\dAnc^{\le 0}, \tauan) \]
induces the equivalence
\[ \St( \dStn, \tauan ) \simeq \St( \dAnc^{\le 0}, \tauan ) . \]
The advantage is that $(\dAnc^{\le 0}, \tauan)$ is closed under $\tauan$-descent, in the sense of \cite[Definition 8.3]{Porta_Yu_DNAnG_I}.
In virtue of \cite[Corollary 8.6]{Porta_Yu_DNAnG_I} and of \cref{lem:functor_points_atlas}, we are therefore reduced to check that $i$ induces an equivalence between the $\infty$-categories of geometric stacks.
Using \cite[Corollary 2.26]{Porta_Yu_Higher_analytic_stacks_2014}, we are reduced to prove the following fact:

\begin{lem}
	Let $X \in \dAnc^{\le 0}$.
	Then $\phi(X)$ is a $1$-geometric stack with respect to the context $(\dStn, \tauan, \bP\et)$.
\end{lem}

\begin{proof}
	Let $X \in \dAnc^{\le 0}$ and suppose first that the underlying topological space of $X$ is Hausdorff.
	Then we already showed in the proof of \cref{lem:functor_points_atlas} that $\phi(X)$ admits a $(-1)$-atlas.
	Let us prove that the diagonal $\phi(X) \to \phi(X) \times \phi(X)$ is also $(-1)$-representable.
	Since $\phi$ commutes with limits, we are immediately reduced to prove that if $Y$ is a derived Stein space and $Y \to X \times X$ is any map, then the fiber product
	\[ Y \times_{X \times X} X \]
	is a derived Stein space.
	Since the underlying topological space of $X$ is Hausdorff, the diagonal $X \to X \times X$ is a closed immersion.
	In particular, $Y \times_{X \times X} X \to Y$ is also a closed immersion.
	It follows that the above fiber product is a derived Stein space, and therefore that $\phi(X)$ is $0$-geometric.
	
	Choose now an open cover $\{U_i\}$ of $X$ by derived Stein spaces.
	Then arguing as in the proof of \cref{lem:functor_points_atlas} we see that for any derived Stein space $Y$ and any map $Y \to X$ the fiber product $Y \times X U_i$ is Hausdorff. In particular, $\phi(Y \times_X U_i)$ is $0$-geometric.
	Furthermore, for any morphism $Y \to X \times X$, the fiber product $Y \times_{X \times X} X$ is locally closed inside $Y$ and it is therefore Hausdorff.
	It follows that the diagonal is $0$-geometric as well.
	In conclusion, $\phi(X)$ is $1$-geometric.
\end{proof}

This shows that $\phi \colon \dAnc \to \St(\dStn, \tauan)$ induces a fully faithful functor
\[ \phi \colon \dAnc^{\le n} \to \mathrm{DM}_n . \]
We are left to prove that $\phi$ is essentially surjective.

Let now $F \in \mathrm{DM}_n$.
Then the small \'etale site $F\et$ of $F$ is equivalent to the small \'etale site of $\trunc(F)$.
Since $F$ is $n$-truncated, it follows from \cite[Proposition 8.2]{Porta_Yu_DNAnG_I} that $\trunc(F)\et$ is categorically equivalent to an $n$-category.
In particular, the $\infty$-topos
\[ \cX_F \coloneqq \Sh( \trunc(F)\et, \tauan ) \]
is $n$-localic.

Consider now the composition
\[ \cTan \times (F\et)^{\mathrm{op}} \to \dAnc \times (\dAnc)^{\mathrm{op}} \xrightarrow{\Map} \cS . \]
This induces a well defined functor
\[ \cO_F \colon \cTan \to \PSh(F\et) \]
which is hypercomplete in virtue of \cref{lem:subcanonical}.

\begin{lem}
	Keeping the above notations, $\cO_F$ commutes with products, admissible pullbacks and takes $\tau$-covers to effective epimorphisms.
	In other words, $\cO_F$ defines a $\cTan$-structure on $\cX_F$.
\end{lem}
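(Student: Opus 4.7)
The plan is to verify the three defining properties of a $\Tan$-structure directly from the definition of $\cO_X$ as the composition of the inclusion $\Tan \hookrightarrow \dAn^0$ with the Yoneda embedding restricted to $X_\et \subset \dAn^0_{/X}$. Since the Yoneda embedding preserves all small limits that exist in the source, the burden is on controlling how the inclusion $\Tan \to \dAn^0$ interacts with products, admissible pullbacks, and coverings.

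First, for products: the product $U \times V$ in $\Tan$ (computed as the Cartesian product of open subsets of some $\mathbb{C}^n$) agrees with the product in $\dAn^0$ via the functor $\Phi$ of \cref{prop:basic_properties_of_DAn}, since $\Phi$ is fully faithful and the product of two Stein opens is again a Stein open. Consequently $\cO_X(U \times V)(Y) \simeq \Map_{\dAn^0}(Y, U \times V) \simeq \Map_{\dAn^0}(Y,U) \times \Map_{\dAn^0}(Y,V)$ functorially in $(Y \to X) \in X_\et$, and products in $\PSh(X_\et)$ are computed objectwise. Second, for admissible pullbacks: admissible morphisms in $\Tan$ are open immersions, hence in particular local biholomorphisms, so \cref{prop:basic_properties_of_DAn}(2) implies that $\Tan \to \dAn^0$ preserves pullbacks along admissible morphisms. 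Again by Yoneda, $\cO_X$ sends an admissible pullback in $\Tan$ to a pullback in $\PSh(X_\et)$, and limits in the hypercomplete sheaf $\infty$-topos $\cX$ are computed in $\PSh(X_\et)$.

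The only genuinely non-formal point is the third: showing that an admissible covering $\{U_i \to U\}$ in $\Tan$ is sent by $\cO_X$ to an effective epimorphism $\coprod \cO_X(U_i) \to \cO_X(U)$ in $\cX$. Here I would exploit the fact that $\cX = \Sh(X_\et, \tau)$, so an effective epimorphism is detected after sheafification, equivalently by checking that every section lifts locally on a $\tau$-cover. Fix $(Y \to X) \in X_\et$ and a section $s \in \cO_X(U)(Y)$, i.e.\ a morphism $s \colon Y \to U$ in $\dAn^0$. Pulling back $\{U_i \to U\}$ along $s$ yields morphisms $Y_i \coloneqq Y \times_U U_i \to Y$, which are open immersions by stability of admissible morphisms; since $\{U_i \to U\}$ covers, the collection $\{Y_i \to Y\}$ is a $\tau$-covering of $Y$ in $X_\et$. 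By construction $s|_{Y_i}$ lifts through $\cO_X(U_i)(Y_i)$, which gives the required local surjectivity.

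Assembling these three verifications establishes that $\cO_X$ lies in $\Str_{\Tan}(\cX)$. I anticipate no real obstacle beyond checking the covering condition carefully, and even there the argument is essentially formal once one knows that open immersions in $\Tan$ remain open immersions (hence étale) in $\dAn^0$ after base change. The output $\cO_X$ is automatically hypercomplete by the already established \cref{cor:little_phi_hypersheaves}, so no further truncation is required.
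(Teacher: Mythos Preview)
Your proof is correct and, for products and admissible pullbacks, matches the paper's approach exactly: both of you invoke \cref{prop:basic_properties_of_DAn} to see that the inclusion $\Tan \to \dAn^0$ preserves these limits, and then use that the Yoneda construction respects limits objectwise.

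For the covering condition the two arguments diverge slightly. The paper appeals to \cref{cor:little_phi_commutes_with_etale_colimits} to conclude that $\Tan \to \dAn^0$ sends $\tau$-covers to effective epimorphisms (in the ambient sheaf category), and then declares the passage to $\cX = \Sh(X_\et,\tau)$ ``straightforward''. You instead verify the effective epimorphism condition directly in $\cX$: given a section $s \colon Y \to U$ with $Y \in X_\et$, you pull back the cover $\{U_i \to U\}$ to obtain an \'etale cover $\{Y_i \to Y\}$ in $X_\et$ over which $s$ lifts. This is a legitimate and arguably cleaner route, since it avoids the implicit step of transporting effective epimorphisms along the restriction $\PSh(\dAn^0) \to \PSh(X_\et)$; it makes explicit exactly why the \'etale topology on $X_\et$ suffices. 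The only point worth a sentence of care is that $Y_i \coloneqq Y \times_U U_i$ really lands back in $X_\et$ (because $Y_i \to Y$ is \'etale and composes with the \'etale $Y \to X$), which you observe but could state more visibly.
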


\begin{proof}
	The functor $\cTan \to \dAnc$ commutes with products and admissible pullbacks by construction. Moreover, it takes $\tau$-covers to effective epimorphisms in virtue of \cref{lem:subcanonical}.
\end{proof}

If $\{U_i \to F\}$ is an \'etale atlas of $F$, each $U_i$ defines an object $V_i$ in $\cX_F$. 
Unraveling the definitions, we see that the $\cTan$-structured topos $((\cX_F)_{/V_i}, \cO_F |_{V_i})$ is canonically isomorphic to $U_i \in \dAnc$ itself.
Therefore $X \coloneqq (\cX_F, \cO_F)$ belongs to $\dAnc^{\le n}$.

We are left to prove that $\phi(X') \simeq F$. We can proceed by induction on the geometric level $n$ of $X$.
If $n = -1$, the statement is clear.
If $n \ge 0$, the proof of \cref{lem:functor_points_atlas} shows that the \v{C}ech nerve of $\coprod U_i \to X$ is a groupoid presentation for $\phi(X')$. Since $\phi$ commutes with \v{C}ech nerves of \'etale maps and their realizations, we conclude that $\phi(X)$ is equivalent to $F$ itself.
The proof of \cref{thm:functor_of_points} is thus achieved.

\section{Analytification functor} \label{sec:analytification}

The transformation of pregeometries
\[ (-)\an \colon \cTet \to \cTan \]
induces a forgetful functor
\[ (-)\alg \colon \RTop(\cTan) \to \RTop(\cTet) . \]
In virtue of \cite[Theorem 2.1.12]{DAG-V}, this functor admits a left adjoint, denoted
\[ (-)\an \colon \RTop(\cTet) \to \RTop(\cTan) . \]
It follows from \cite[Proposition 2.3.18]{DAG-V} that $(-)\an$ takes $\cTet$-schemes to $\cTan$-schemes.
However, if we can identify on one side $\cTet$-schemes with complex derived \DM stacks, the same cannot be said for $\cTan$-schemes and derived analytic spaces.
Therefore, we have to prove that $(-)\an$ takes complex derived \DM stacks locally almost of finite presentation to derived analytic spaces.
This what we achieve in this section. Along the way, we establish the comparison with the classical analytification. \\

Let us start by recalling the notion of complex derived \DM stack almost of finite presentation:

\begin{defin} \label{def:almost_finite_presentation}
	Let $X \coloneqq (\cX, \cO_X)$ be a complex derived \DM stack.
	We say that $X$ is \emph{locally almost of finite presentation} if $(\cX, \pi_0( \cO_X))$ is a \DM stack of finite presentation and each $\pi_i(\cO_X)$ is coherent as sheaf of $\pi_0(\cO_X)$-modules.
\end{defin}

\begin{rem}
	The above definition differs from \cite[Definition 7.2.4.26]{Lurie_Higher_algebra}.
	However, the two definitions are equivalent.
	Indeed, the question is local on $X$ and we can therefore suppose that $X$ is affine.
	Let us write $X = \Spec(A)$.
	It follows from Theorem 7.4.3.18 in loc.\ cit.\ that $A$ is almost of finite presentation in the sense of \cite{Lurie_Higher_algebra} if and only if $\pi_0(A)$ is of finite presentation over $\mathbb C$ and the cotangent complex $\mathbb L_{A / \mathbb C}$ is almost perfect.
	Using the equivalence
	\[ \pi_{n+2}( \mathbb L_{\tau_{\le n} A / \tau_{\le n +1} A} ) \simeq \pi_{n+1}(A) \]
	provided by \cite[Lemma 2.2.2.8]{HAG-II} and proceeding by induction on the Postnikov tower, it is easy to see that if $A$ is almost of finite presentation then $A$ in the sense of \cite{Lurie_Higher_algebra}, then it is almost of finite presentation in the sense of the above definition.
	The converse follows from the fact that $\pi_0(A)$ is noetherian (being of finite presentation over $\mathbb C$) and from the derived Hilbert's basis theorem, \cite[Theorem 7.2.4.31]{Lurie_Higher_algebra}.
\end{rem}

\begin{lem}
	Let $X \coloneqq (\cX, \cO_X)$ be a complex derived \DM stack almost of finite presentation.
	Set $X\an \coloneqq (\cX\an, \cO_{X\an}) \in \RTop(\cTan)$.
	Then if $\cO_X$ is $n$-truncated, the same goes for $\cO_{X\an}$.
\end{lem}

\begin{proof}
	This follows directly from \cref{prop:relative_spectrum_truncated_objects}.
\end{proof}

\begin{lem} \label{lem:analytification_discrete}
	Let $X \coloneqq (\cX, \cO_X)$ be a complex derived \DM stack almost of finite presentation.
	If $\cO_X$ is discrete, then $X\an$ is a derived analytic space, and it coincides with the classical analytification of $X$.
\end{lem}

\begin{proof}
	Using \cite[2.1.3]{DAG-V} we can reason locally on $X$ and therefore assume that $X$ is affine.
	In this case, we can find an underived pullback diagram of the form
	\[ \begin{tikzcd}
		X \arrow{r} \arrow{d} & \bbA^n_{\mathbb C} \arrow{d} \\
		\Spec(\mathbb C) \arrow{r}{0} & \bbA^m_{\mathbb C} .
	\end{tikzcd} \]
	Let $X'$ be the derived pullback of the above diagram.
	Then $\trunc(X') \simeq X$.
	Moreover, since the bottom morphism in the above square is a closed immersion, it follows from \cite[Proposition 4.1]{DAG-IX} that $X'$ is the pullback of the above diagram in $\RTop(\cTet)$.
	In particular, we have a pullback diagram
	\[ \begin{tikzcd}
		(X')\an \arrow{r} \arrow{d} & (\mathbb A^n_{\mathbb C})\an \arrow{d} \\
		\Spec(\mathbb C)\an \arrow{r} & (\bbA^m_{\mathbb C})\an .
	\end{tikzcd} \]
	Since $\bbA^r_{\mathbb C} \in \cTet$ for every $r \ge 0$, and since the classical analytification of $\bbA^r_{\mathbb C}$ is $\bA^r_{\mathbb C}$, we conclude that
	\[ (\Spec(\mathbb C))\an \simeq *_\mathbb{C}, \quad (\bbA^n_{\mathbb C})\an \simeq \bA^n_{\mathbb C} , \quad (\bbA^m_{\mathbb C})\an \simeq \bA^m_{\mathbb C} . \]
	It follows from \cref{prop:relative_spectrum_and_truncations} that
	\[ \trunc((X')\an) \simeq \trunc(X')\an \simeq X\an . \]
	Finally, since $*_\mathbb{C}$, $\bA^n_{\mathbb C}$ and $\bA^m_{\mathbb C}$ are derived analytic spaces and since derived analytic spaces are closed under pullbacks along closed immersions in $\RTop(\cTan)$ (by \cite[Proposition 12.10]{DAG-IX}), we conclude that $X\an$ is a derived analytic space.
	The universal property of the analytification shows that it coincides with the classical analytification of $X$.
\end{proof}

\begin{thm} \label{thm:derived_analytification}
	Let $X$ be a complex derived \DM stack locally almost of finite presentation.
	Then $X\an$ is a derived analytic space.
\end{thm}

\begin{proof}
	Let $\cC$ be the full subcategory of derived \DM stacks locally almost of finite presentation spanned by those $X$ such that $X\an$ is a derived analytic space.
	Then:
	\begin{enumerate}
		\item in virtue of \cref{lem:analytification_discrete}, $\cC$ contains discrete \DM stacks locally almost of finite presentation;
		\item if $X$ admits an \'etale cover $\{U_i\}$ such that each $U_i$ belongs to $\cC$, then $X$ belongs to $\cC$. This follows from \cite[Lemma 2.1.3]{DAG-V};
		\item if $X = (\cX, \cO_X)$ is such that every truncation $\mathrm t_{\le n} X = (\cX, \tau_{\le n} \cO_X)$ belongs to $\cC$, then $X$ belongs to $\cC$.
		This is a direct consequence of Propositions \ref{prop:relative_spectrum_and_truncations} and \ref{prop:relative_spectrum_truncated_objects};
		\item $\cC$ is closed under pullbacks along closed immersions. This is because derived analytic spaces are closed under pullback along closed immersions in $\RTop(\cTan)$;
		\item $\cC$ is closed under finite limits. Indeed, since pullbacks can be constructed out of pullbacks along closed immersions and products of affine spaces, this fact follows from the previous point and the fact that the analytification of $\bbA^n_{\mathbb C}$ is $\bA^n_{\mathbb C}$.
	\end{enumerate}
	It follows that it is enough to prove that if $X$ is a derived affine space which is $n$-truncated and almost of finite presentation, then $X\an$ is a derived analytic space.
	In this case, we can write $X$ as retract of $\mathrm t_{\le n}(Y)$, where $Y$ is of finite presentation.
	As $Y$ can be written as a finite limit of a diagram taking values in the affine spaces, we conclude that $Y \in \cC$.
	Moreover, since we already know that $\trunc(X) \in \cC$, we are left to check that the sheaves $\pi_i(\cO_{X\an}\alg)$ are coherent.
	This follows because $X\an$ is retract of $(\mathrm t_{\le n} Y)\an$.
\end{proof}

\section{Local theory} \label{sec:local_theory}

This section is devoted to the study of the category of analytic rings.

\begin{defin} \label{def:analytic_ring}
	The \emph{$\infty$-category of analytic rings}, denoted $\AnRing_{\mathbb C}$, is by definition the full subcategory of $\Strloc_{\cTan}(\cS)$ spanned by those $\cTan$-structures $A$ such that $A(\mathbb C)$ is \emph{not} contractible.
\end{defin}

The transformation of pregeometries
\[ (-)\an \colon \cTet \to \cTan \]
induced by the classical analytification functor of \cite[Expos\'e XII]{SGA1} produces a forgetful functor
\[ (-)\alg \colon \AnRing_{\mathbb C} \to \CRing_{\mathbb C} , \]
whose essential image is contained in the full subcategory of $\CRing_{\mathbb C}$ spanned by those simplicial commutative rings $A$ such that $\pi_0(A)$ is local and strictly Henselian.
We refer to $(-)\alg$ as the \emph{underlying algebra functor}.
It follows from \cite[Proposition 11.9]{DAG-IX} that $(-)\alg$ is conservative.

\begin{rem}
	According to the above definition, the underlying algebra of an analytic ring is always a \emph{local} ring.
	It is possible to develop a \emph{global} theory of analytic rings (see \cite{Lurie_SAG}).
	As a consequence, it would be more precise to refer to the objects of $\AnRing_{\mathbb C}$ as \emph{local analytic rings}.
	However, since in this paper we are only concerned with local analytic rings, we decided to refer to them simply as analytic rings.
\end{rem}

\begin{rem}
	Let $A \in \Strloc_{\cTan}(\cS)$.
	Then $A(\mathbb C)$ is not contractible if and only if $A\alg$ is not the zero ring.
	Notice that if this implies that $\pi_0(A(\mathbb C))$ has at least two elements.
\end{rem}

As remarked at the beginning of \cref{subsec:local_structures}, $\AnRing_{\mathbb C}$ is not presentable, at least a priori.
The first goal of this section is to prove that, in this special case, this is true.

\subsection{Residue fields}

The field of complex numbers $\mathbb C$ can be naturally seen as an analytic ring.
Indeed, if we denote by $*_{\mathbb C}$ the complex analytic space reduced to a single point with $\mathbb C$ as structure sheaf, then
\[ \Hom_{\Anc}(*_{\mathbb C}, -) \colon \cTan \to \cS \]
is a discrete analytic ring, whose underlying algebra coincides with $\mathbb C$.
We will commit an abuse of notation and write $\mathbb C$ to denote both the analytic ring $\Hom_{\Anc}(*_{\mathbb C}, -)$ and the field of complex numbers.
The following theorem is an analogue in the setting of analytic rings of the well known fact that if a complex commutative Banach algebra is a field, then it is isomorphic to $\mathbb C$:

\begin{thm} \label{thm:residue_field}
	The forgetful functor
	\[ ( \AnRing_{\mathbb C} )_{/\mathbb C} \to \AnRing_{\mathbb C} \]
	is an equivalence of $\infty$-categories.
	In particular, if $A$ is an analytic ring, then the residue field of $A\alg$ is isomorphic to $\mathbb C$.
\end{thm}

Before giving the proof of the above theorem, let us describe two consequences:

\begin{cor} \label{cor:analytic_rings_presentable}
	The $\infty$-category of analytic rings is presentable.
\end{cor}

\begin{proof}
	It follows from \cref{cor:augmented_local_strutures_presentable} that $(\AnRing_{\mathbb C})_{/\mathbb C}$ is a presentable $\infty$-category.
	The statement is therefore a direct consequence of \cref{thm:residue_field}.
\end{proof}

Let $\CRing_{\mathbb C}^{\mathrm{aug}}$ denote the overcategory $(\CRing_{\mathbb C})_{/ \mathbb C}$.
Then \cref{thm:residue_field} implies that the underlying algebra functor factors as
\[ (-)\alg \colon \AnRing_{\mathbb C} \to \CRing_{\mathbb C}^{\mathrm{aug}} . \]
We have:

\begin{cor} \label{cor:local_analytification}
	The underlying algebra functor $(-)\alg \colon \AnRing_{\mathbb C} \to \CRing_{\mathbb C}^{\mathrm{aug}}$ admits a left adjoint.
\end{cor}

\begin{proof}
	This is a direct application of \cref{cor:augmented_local_structures_change_of_pregeometry}.
\end{proof}

We denote the left adjoint of $(-)\alg$ by
\[ (-)\an \colon \CRing_{\mathbb C}^{\mathrm{aug}} \to \AnRing_{\mathbb C} , \]
and refer to $(-)\an$ as the \emph{analytification functor}.
This terminology will be justified later. \\

We now start the proof of \cref{thm:residue_field}.
It is enough to prove that $\mathbb C$ is a final object of $\AnRing_{\mathbb C}$.
We start by recalling that, since $\cTan$ is compatible with $n$-truncations for $n \ge 0$,\footnote{See \cref{subsec:truncation_structures} for a review of this notion, and \cite[Proposition 11.4]{DAG-IX} for a proof of this statement.} if $A \in \AnRing_{\mathbb C}$, then $\pi_0(A)$ is again an analytic ring.
Moreover, $\mathbb C$ is a discrete analytic ring.
As a consequence, we obtain an canonical equivalence
\[ \Map_{\AnRing_{\mathbb C}}(A, \mathbb C) \simeq \Map_{\AnRing_{\mathbb C}}(\pi_0(A), \mathbb C) . \]
In particular, it is enough to prove that $\mathbb C$ is a final object in the category of discrete analytic rings.

\begin{notation}
	We denote by $\AnRing_{\mathbb C}^0$ the full subcategory of $\AnRing_{\mathbb C}$ spanned by discrete analytic rings.
\end{notation}

\begin{rem}
	We can identify $\AnRing_{\mathbb C}^0$ with the $1$-category of functors $A \colon \cTan \to \rSet$ that commutes with products and admissible pullbacks, takes coverings in $\cTan$ to effective epimorphisms and such that $A(\mathbb C)$ consists of at least two elements.
	In particular, a discrete analytic ring has a well defined underlying \emph{set} $A(\mathbb C)$.
	In what follows, we write $f \in A$ to denote that $f$ is an element in $A(\mathbb C)$.
\end{rem}

Taking inspiration from functional analysis, we next introduce the spectrum of an element $f \in A$ in an analytic ring.
Recall that if $B$ is an ordinary ring, $B^\times$ denotes the multiplicative subgroup spanned by invertible elements.

\begin{defin}
	Let $A \in \AnRing_{\mathbb C}$ and let $f\in A$.
	The \emph{spectrum of $f$} is the subset of $\mathbb C$ defined by
	\[ \sigma_A(f) \coloneqq \left\{ z \in \mathbb C \mid f - z \notin (A\alg)^\times \right\} \]
\end{defin}

We summarize the main properties of the spectrum of an element in the following lemma:

\begin{lem} \label{lem:sorite_functional_spectrum}
	Let $A \in \AnRing_{\mathbb C}$ and let $f \in A$.
	\begin{enumerate}
		\item $A(\emptyset) = \emptyset$;
		\item Let $z \in \mathbb C$. Then $z \notin \sigma_A(f)$ if and only if $f \in A(\mathbb C^*_z)$, where $\mathbb C^*_z \coloneqq \mathbb C \smallsetminus \{z\}$.
		\item The spectrum $\sigma_A(f)$ contains exactly one element.
	\end{enumerate}
\end{lem}

\begin{proof}
	We start by proving statement (1).
	Observe that $\emptyset \to *$ is an open immersion.
	It follows that $A(\emptyset) \subseteq A(*)$ and therefore that $A(\emptyset)$ has at most of one element.
	Suppose by contradiction that $A(\emptyset) \ne \emptyset$.
	Let $z \in \mathbb C$ and denote by $t_z \colon \mathbb C \to \mathbb C$ the translation by $z$.
	Then we have the following commutative triangle
	\[ \begin{tikzcd}
		{} & \emptyset \arrow{dl} \arrow{dr} \\
		\mathbb C \arrow{rr}{t_a} & & \mathbb C .
	\end{tikzcd} \]
	Applying $A$, we see that $A(\emptyset) \to \mathbb A(\mathbb C)$ selects an element $f \in A(\mathbb C)$ such that $f + z = f$ for every $z \in \mathbb C$.
	Since $A\alg$ is not the zero ring by assumption, this is impossible unless $z = 0$.
	It follows that $A(\emptyset) = \emptyset$.
	
	We now turn to statement (2).
	When $z = 0$, we simply write $\mathbb C^*$ instead of $\mathbb C^*_0$.
	It follows from \cite[Lemma 4.2.5]{DAG-V} that $A(\mathbb C^*) = (A\alg)^\times$.
	Suppose therefore that $f \in A(\mathbb C^*_z)$.
	Then $f - z \in A(\mathbb C^*) \simeq (A\alg)^\times$ and therefore $z \notin \sigma_A(f)$.
	Vice-versa, suppose that $z \notin \sigma_A(f)$.
	Then $f -  z \in (A\alg)^\times = A(\mathbb C^*)$ and as a consequence, $f \in A(\mathbb C^*_z)$.
	
	Finally, we prove statement (3). We start by observing that $\sigma_A(f)$ consists at most of one element.
	Indeed, if $z, w \in \sigma_A(f)$ are distinct, we can choose disjoint open neighborhoods $U$ and $V$ of $z$ and $w$ in $\mathbb C$, respectively.
	Thus, we have coverings
	\[ \mathbb C = U \cup \mathbb C^*_z , \qquad \mathbb C = V \cup \mathbb C^*_w . \]
	Since $A$ takes coverings to epimorphism, we conclude that $f$ belongs either to $A(U)$ or to $A(\mathbb C^*_z)$.
	Since point (2) shows that $f \notin A(\mathbb C^*_z)$, it follows that $f \in A(U)$.
	Similarly, the second covering we considered implies that $f \in A(V)$.
	In particular, 
	\[ f \in A(U) \cap A(V) \simeq A(U \cap V) = A(\emptyset) = \emptyset . \]
	This is a contradiction, and therefore we conclude that $\sigma_A(f)$ has at most one element.
	
	We are left to prove that $\sigma_A(f)$ is not empty. Suppose again by contradiction that $\sigma_A(f) = \emptyset$.
	Then point (2) implies that $f \in A(\mathbb C^*_z)$ for every $z \in \mathbb C$.
	Let $\varepsilon > 0$ be a real number and denote by $\overline{\mathrm D}(z,\varepsilon)$ (resp.\ $\mathrm D(z, \varepsilon)$) the closed disk centered at $z$ and of radius $\varepsilon$.
	Then
	\[ \mathbb C^*_z = \bigcup_{\varepsilon > 0} \mathbb C \smallsetminus \overline{\mathrm D}(z, \varepsilon) . \]
	Since $A$ takes coverings to effective epimorphisms, we conclude that there exists $\varepsilon_z > 0$ such that
	\[ f \in A\left( \mathbb C \smallsetminus \overline{\mathrm D}(z, \varepsilon_z) \right) . \]
	Since $\mathbb C \smallsetminus \overline{\mathrm D}(z, \varepsilon_z)$ is disjoint from $\mathrm D(z, \varepsilon_z)$, point (1) implies that
	\[ f \notin A \left( \mathrm D(z, \varepsilon_z) \right) . \]
	However, we have
	\[ \mathbb C = \bigcup_{z \in \mathbb C} \mathrm D(z, \varepsilon_z) . \]
	Since $A$ takes coverings to epimorphisms, we see that this produces a contradiction.
	Therefore $\sigma_A(f) \ne \emptyset$ and the proof is thus complete.
\end{proof}

\begin{cor} \label{cor:formal_properties_functional_spectrum}
	Let $A \in \AnRing_{\mathbb C}$.
	Then:
	\begin{enumerate}
		\item let $U \subset \mathbb C$ be an open. If $a \in A(U)$, then $\sigma_A(a) \in U$.
		
		\item Let $U \subseteq \mathbb C^n$ and let $(a_1, \ldots, a_n) \in A(U)$.
		Let $f \colon U \to \mathbb C^m$ be a holomorphic function and let $f_A \colon A(U) \to A(\mathbb C^m)$ be the induced function.
		Set $f_A(a_1, \ldots, a_n) \coloneqq (b_1, \ldots, b_n)$.
		Then
		\[ (\sigma_A(b_1), \ldots, \sigma_A(b_m)) = f( \sigma_A(a_1), \ldots, \sigma_A(a_n) ) . \]
	\end{enumerate}
\end{cor}

\begin{proof}
	We prove point (1). Set $z \coloneqq \sigma_A(a)$. Then \cref{lem:sorite_functional_spectrum}(2) implies that $a \notin A( \mathbb C^*_z )$.
	Observe that if $z \notin U$ then $A(U) \subset A(\mathbb C^*_z)$.
	Since $a \in A(U)$, we must therefore conclude that $z \in U$.
	
	We now turn to point (2).
	We can assume without loss of generality that $m = 1$.
	Furthermore, we can assume that $\sigma_A(a_i) = 0$ for $1 \le i \le n$.
	Point (1) guarantees that $0_n \coloneqq (0, \ldots, 0) \in U$.
	Set $w \coloneqq f(0_n)$.
	Up to replacing $f$ with $f - w$, we can also assume $w = 0$.
	\Cref{lem:sorite_functional_spectrum}(2) implies that $(a_1, \ldots, a_n) \in A( U \smallsetminus \{0_n\} )$.
	Since $A$ takes coverings to effective epimorphisms, we see that we can choose an open Stein neighborhood $U'$ of $(0_n)$ in $U$ such that $(a_1, \ldots, a_n) \in A(U')$.
	In other words, we can assume from the beginning that $U$ is Stein.
	
	Using \cref{lem:sorite_functional_spectrum}(3), it is enough to prove that
	\[ 0 \in \sigma_A(f_A(a_1, \ldots, a_n)) . \]
	Set
	\[ b \coloneqq f_A( a_1, \ldots, a_n ) . \]
	Then we have to show that $b$ is not invertible in $A$.
	Suppose by contradiction that it is.
	Since $U$ is Stein and $f(0_n) = 0$, we can find holomorphic functions $g_1, \ldots, g_n \colon U \to \mathbb C$ such that
	\[ f = g_1 z_1 + \ldots, g_n z_n , \]
	where for $1 \le i \le n$ we denote $z_i$ the $i$-th coordinate function on $\mathbb C^n$.
	In particular, we have
	\[ b = g_1(a_1, \ldots, a_n) a_1 + \ldots + g_n(a_1, \ldots, a_n) a_n . \]
	If $b$ is invertible, then the ideal generated by $a_1, \ldots, a_n$ is the unit ideal.
	Nevertheless, $A\alg$ is a local ring and $a_1, \ldots, a_n$ belong to the maximal ideal of $A\alg$.
	This is a contradiction.
\end{proof}

We are now ready to prove the main result of this section, which implies \cref{thm:residue_field} thanks to \cite[1.2.12.4]{HTT}:

\begin{prop}
	The analytic ring $\mathbb C$ is a final object for $\AnRing_{\mathbb C}^0$.
\end{prop}

\begin{proof}
	Let $A \in \AnRing_{\mathbb C}$.
	Let us temporarily denote by $H$ the analytic ring $\mathbb C$.
	We define a morphism of analytic rings $s_A \colon A \to H$ as follows.
	For $(f_1, \ldots, f_n) \in A(\mathbb C^n)$, we set
	\[ s_A(f_1, \ldots, f_n) \coloneqq ( \sigma_A(f_1), \ldots, \sigma_A(f_n) ) \in \mathbb C^n \simeq H(\mathbb C^n) . \]
	If $U \subset \mathbb C^n$ is an open subset and $(f_1, \ldots, f_n) \in A(U)$, \cref{cor:formal_properties_functional_spectrum}(1) guarantees that
	\[ s_A(f_1, \ldots, f_n) \in U . \]
	Furthermore, \cref{cor:formal_properties_functional_spectrum}(2) ensures that for any pair of open subsets $U \subset \mathbb C^n$ and $V \subset \mathbb C^m$ and any holomorphic map
	\[ f \colon U \to V , \]
	the diagram
	\[ \begin{tikzcd}
		A(U) \arrow{r}{f_A} \arrow{d}{s_A} & A(V) \arrow{d}{s_A} \\
		U \arrow{r}{f} & V
	\end{tikzcd} \]
	commutes.
	In other words, $s_A \colon A \to H$ is a natural transformation.
	
	Let us prove that $s_A$ is a local morphism of analytic rings.
	Unraveling the defiinitions, we see that it is enough to prove that if $U \subset \mathbb C$ is an open subset, then the diagram
	\[ \begin{tikzcd}
		A(U) \arrow[hook]{r} \arrow{d}{s_A} & A(\mathbb C) \arrow{d}{s_A} \\
		U \arrow[hook]{r} & \mathbb C
	\end{tikzcd} \]
	is a pullback square.
	Let therefore $a \in A(\mathbb C)$ and suppose that $z \coloneqq s_A(a) \in U$.
	Then \cref{lem:sorite_functional_spectrum}(2) guarantees that $f \notin A(\mathbb C^*_z)$.
	Since $z \in U$, we see that
	\[ \mathbb C = U \cup \mathbb C^*_z . \]
	Since $A$ takes coverings to epimorphism, we deduce that $a \in A(U)$.
	This proves that $s_A \colon A \to H$ is a local morphism of analytic rings.
	
	We are left to prove that $\sigma$ is unique.
	Equivalently, we prove that for any local morphism of analytic rings $f \colon A \to B$ one has $s_A = s_B \circ f$.
	Let $a \in A$ and set $z \coloneqq s_A(a)$.
	Then $a - z$ is not invertible.
	Using \cref{lem:sorite_functional_spectrum}(2) and the fact that $A$ takes coverings to effective epimorphisms, we deduce that $a-z$ belongs to $A( \mathrm D(0, \varepsilon))$ for every $\varepsilon > 0$.
	As a consequence, $f(a - z) = f(a) - z$ belongs to $B(\mathrm D(0, \varepsilon))$ for every $\varepsilon > 0$.
	Using \cref{cor:formal_properties_functional_spectrum}, we deduce that $s_B(f(a)) \in \mathrm D(z, \varepsilon)$ for every $\varepsilon > 0$.
	In other words, $s_B(f(a)) = z = s_A(a)$.
	This proves that $s_A = s_B \circ f$. The proof is therefore complete.
\end{proof}

\subsection{Analytification and flatness} \label{subsec:analytification_flatness}

We now turn our attention to the analytification functor
\[ (-)\an \colon \CRing_{\mathbb C}^{\mathrm{aug}} \to \AnRing_{\mathbb C} \]
introduced in \cref{cor:local_analytification}.
Our goal is to prove the following theorem.

\begin{thm} \label{thm:local_analytification_flat}
	Let $A \in \CRing_{\mathbb C}^{\mathrm{aug}}$.
	Then the canonical map
	\[ \eta_A \colon A \longrightarrow (A\an)\alg \]
	is flat in the derived sense.
	In other words $\pi_0(\eta_A)$ is flat and, for every $n \ge 0$, $\eta_A$ induces isomorphisms
	\[ \pi_n(A) \otimes_{\pi_0(A)} \pi_0((A\an)\alg) \simeq \pi_n((A\an)\alg) . \]
\end{thm}

Before giving the proof of this theorem, we need a more geometric characterization of the analytification functor introduced in \cref{cor:local_analytification}.
In order to obtain such description, it is convenient to \emph{temporarily} work with the complex Zariski pregeometry $\cT_{\mathrm{Zar}} = \cT_{\mathrm{Zar}}(\mathbb C)$ (see \cite[4.2]{DAG-V}).
Notice that $\bbA^n_{\mathbb C} \in \cT_{\mathrm{Zar}}$.
We can therefore consider it as a $\cT_{\mathrm{Zar}}$-structured topos.
Notice also that there is a transformation of pregeometries
\[ (-)\an \colon \cT_{\mathrm{Zar}} \longrightarrow \cTan \]
that factors through $\cTet$. In particular, the analytification of a derived scheme $X$ locally almost of finite presentation, considered as a $\cT_{\mathrm{Zar}}$-structured topos, coincides with the one obtained by considering $X$ as a $\cTet$-structured topos.

Let therefore $X$ be a complex derived \emph{scheme} locally almost of finite presentation, considered as a $\cT_{\mathrm{Zar}}$-structured topos.
Let $X\an \coloneqq (\cX\an, \cO_{X\an})$ be its analytification.
Let $x \colon \Spec(\mathbb C) \to X$ be a \emph{closed} point and let $z \colon *_{\mathbb C} \to X\an$ be the corresponding point of the analytification.
Let
\[ x_* \colon \cS \leftrightarrows \cX \colon x\inv , \qquad z_* \colon \cS \leftrightarrows \cX\an \colon z\inv \]
be the induced geometric morphisms of $\infty$-topoi.
We can consider $x\inv \cO_X$ as an object in $\CRing_{\mathbb C}^{\mathrm{aug}}$.
In particular, we can apply to $x\inv \cO_X$ the analytification functor introduced in \cref{cor:local_analytification}.
Notice that the canonical application
\[ p \colon (\cX\an, \cO_{X\an}) \to (\cX, \cO_X) \]
induces a morphism
\[ \alpha \colon x\inv \cO_X \longrightarrow z\inv \cO_{X\an} . \]
We have:

\begin{lem} \label{lem:local_analytification_affine_space}
	The morphism $\alpha \colon x\inv \cO_X \to z\inv \cO_{X\an}$ exhibits $z\inv \cO_{X\an}$ as analytification of $x\inv \cO_X$.
\end{lem}

\begin{proof}	
	Observe that both $\cX$ and $\cX\an$ are $0$-localic $\infty$-topoi.
	We abuse notation and denote by $X$ and $X\an$ their underlying topological spaces, and by $p \colon X\an \to X$ the continuous morphism between them.
	Notice that since $x \in X$ is a closed point, the geometric morphism $x_* \colon \cS \to \cX$ is a closed immersion of $\infty$-topoi.
	Furthermore, $p\inv(\{x\}) = \{z\}$.
	We can therefore deduce from \cite[7.3.2.13]{HTT} that the square
	\[ \begin{tikzcd}
		\cS \arrow{d}{z_*} \arrow[equal]{r} & \cS \arrow{d}{z_*} \\
		\Sh(X\an) \arrow{r} & \Sh(X)
	\end{tikzcd} \]
	is a pullback in $\RTop$ (here $z_*$ denotes the geometric morphism determined by the point $z$).
	In particular, we deduce from \cite[Lemma 2.1.3]{DAG-V} that $(\cS, z\inv \cO_{X\an})$ is the analytification of $(\cS, z\inv \cO_X)$.
	At this point, the universal property of the analytification allows to conclude.
\end{proof}

\begin{proof}[Proof of \cref{thm:local_analytification_flat}]
	Let $S^n \in \cS$ denote the $n$-sphere.
	The forgetful functor $\CRing_{\mathbb C} \to \cS$ has a left adjoint, denoted $\mathbb C[-]$.
	Every element in $A \in \CRing_{\mathbb C}$ can be written as the colimit of a diagram
	\[ A_0 \xrightarrow{f_0} A_1 \xrightarrow{f_1} \cdots \to A_\alpha \xrightarrow{f_\alpha} \cdots , \]
	where $A_0$ is a discrete augmented $\mathbb C$-algebra (not necessarily of finite presentation over $\mathbb C$) and and each morphism $f_\alpha$ fits in a pushout diagram
	\[ \begin{tikzcd}
		\mathbb C[S^n] \arrow{r} \arrow{d} & \mathbb C[X] \arrow{d} \\
		A_\alpha \arrow{r}{f_\alpha} & A_{\alpha + 1}
	\end{tikzcd} \]
	for some $n \ge 0$.
	Here $\mathbb C[X] \simeq \mathbb C[\{*\}] \simeq \mathbb C[\Delta^n]$ denotes the polynomial algebra on one generator in degree $0$.
	
	Since flat morphisms are stable under filtered colimits, and since the functor $(-)\alg$ commutes with filtered colimits by \cref{cor:augmented_local_structures_change_of_pregeometry}, we are reduced to prove the following statements:
	\begin{enumerate}
		\item the map $\eta_A$ is flat when $A$ is a discrete augmented $\mathbb C$-algebra;
		\item suppose given an integer $n \ge 0$ and a pushout diagram in $\CRing_{\mathbb C}^{\mathrm{aug}}$
		\[ \begin{tikzcd}
			\mathbb C[S^n] \arrow{r}{f} \arrow{d} & \mathbb C[X] \arrow{d} \\
			A \arrow{r} & B .
		\end{tikzcd} \]
		Then if the map $\eta_A$ is flat, the same goes for $\eta_B$.
	\end{enumerate}
	
	We start by proving (1).
	We can write $A$ as union of its finitely presented $\mathbb C$-subalgebras
	\[ A = \bigcup_{\alpha} A_\alpha . \]
	The collection of finitely presented $\mathbb C$-subalgebras of $A$ form a filtered category.
	Since both functors $(-)\an$ and $(-)\alg$ commute with filtered colimits, and since flat morphisms are stable under filtered colimits, it is enough to prove the statement when $A$ is a finitely presented $\mathbb C$-algebra.
	In this case, it follows from Lemmas \ref{lem:local_analytification_affine_space} and \ref{lem:analytification_discrete} that $(A\an)\alg$ is again discrete.
	The conclusion now follows from the classical result of J.\ P.\ Serre \cite[Corollaire 6.1]{Serre_GAGA}.
	
	We now prove (2) by induction on $n \ge 0$.
	We start by considering the case $n = 0$.
	In this case
	\[ \mathbb C[S^0] \simeq \mathbb C[X,Y] . \]
	Let $f \colon \mathbb C[X,Y] \to \mathbb C$ be an augmentation.
	Up to replacing $X$ and $Y$ by $X - f(X)$ and $Y - f(Y)$ respectively, we can assume $f(X) = f(Y) = 0$.
	In this case, \cref{lem:local_analytification_affine_space} identifies the ring $(\mathbb C[X, Y]\an)\alg$ with the ring of germs of holomorphic functions $\mathbb C\{X, Y\}$ at the origin of $\mathbf A^2_{\mathbb C}$.
	In particular, the flatness of
	\[ \mathbb C[X, Y] \to \mathbb C\{X, Y\} \]
	follows because both rings are noetherian and their formal completions at the maximal ideal of $\mathbb C\{X, Y\}$ coincide with the ring of formal power series $\mathbb C \llb X, Y \rrb$.
	Consider furthermore the map $\mathbb C[X,Y] \to \mathbb C[X]$ given by $X, Y \mapsto X$.
	Then the diagram
	\[ \begin{tikzcd}
		\mathbb C[X, Y] \arrow{r}{f} \arrow{d} & \mathbb C[X] \arrow{d} \\
		(\mathbb C[X, Y]\an)\alg \arrow{r} & (\mathbb C[X]\an)\alg ,
	\end{tikzcd} \]
	is a pushout in $\CRing_{\mathbb C}^{\mathrm{aug}}$.
	In particular, the morphism $\mathbb C[X, Y]\an \to \mathbb C[X]\an$ is an effective epimorphism in $\AnRing_{\mathbb C}$.
	
	Let now $\mathbb C[X, Y] \to A$ be any morphism in $\CRing_{\mathbb C}^{\mathrm{aug}}$ and set
	\[ B \coloneqq A \otimes_{\mathbb C[X, Y]} \mathbb C[X] . \]
	Consider the commutative cube
	\[ \begin{tikzcd} [column sep = small, row sep = small]
		{} & \mathbb C[X, Y] \arrow{rr} \arrow{dd} \arrow{dl} & & \mathbb C[X] \arrow{dd} \arrow{dl} \\
		A \arrow[crossing over]{rr} \arrow{dd} & & B \\
		{} & (\mathbb C[X, Y]\an)\alg \arrow{rr} \arrow{dl} & & (\mathbb C[X]\an)\alg \arrow{dl} \\
		(A\an)\alg \arrow{rr} & & (B\an)\alg . \arrow[leftarrow, crossing over]{uu}
	\end{tikzcd} \]
	We already remarked that the back square is a pushout.
	Notice then that since $(-)\an$ is a left adjoint, the square
	\[ \begin{tikzcd}
		\mathbb C[X, Y]\an \arrow{r} \arrow{d} & \mathbb C[X]\an \arrow{d} \\
		A\an \arrow{r} & B\an
	\end{tikzcd} \]
	is a pushout square in $\AnRing_{\mathbb C}$.
	The unramifiedness of $\cTan$ implies therefore that the bottom square of the previous commutative cube is a pushout as well (see \cite[Proposition 11.12(3)]{DAG-IX}).
	As a consequence, the outer and the left square in the rectangle
	\[ \begin{tikzcd}
		\mathbb C[X, Y] \arrow{r} \arrow{d} & A \arrow{r} \arrow{d} & B \arrow{d} \\
		(\mathbb C[X, Y]\an)\alg \arrow{r} & (A\an)\alg \arrow{r} & (B\an)\alg
	\end{tikzcd} \]
	are pushout squares in $\AnRing_{\mathbb C}$.
	It follows from \cite[4.4.2.1]{Lurie_Higher_algebra} that the same goes for the square on the right.
	Since flat maps are stable under base change, we see that statement (2) holds for $m = 0$.
	
	We now suppose $n > 0$ and we argue by induction.
	Recall that the functor $\mathbb C[-] \colon \cS \to \CRing_{\mathbb C}$ is a left adjoint.
	In particular, we have by definition a pushout diagram
	\[ \begin{tikzcd}
		\mathbb C[S^{n-1}] \arrow{r} \arrow{d} & \mathbb C[X] \arrow{d} \\
		\mathbb C[X] \arrow{r} & \mathbb C[S^n] .
	\end{tikzcd} \]
	The inductive hypothesis therefore guarantees that the map
	\[ \mathbb C[S^n] \longrightarrow ( \mathbb C[S^n]\an )\alg \]
	is flat.
	Furthermore, the induction hypothesis guarantees also that the square
	\[ \begin{tikzcd}
		\mathbb C[S^n] \arrow{r}{f} \arrow{d} & \mathbb C[X] \arrow{d} \\
		(\mathbb C[S^n]\an)\alg \arrow{r} & (\mathbb C[X]\an)\alg
	\end{tikzcd} \]
	is a pushout.
	Indeed, let
	\[ A \coloneqq (\mathbb C[S^n]\an)\alg \otimes_{\mathbb C[S^n]} \mathbb C[X] . \]
	Since the map $\mathbb C[S^n] \to (\mathbb C[S^n]\an)\alg$ is flat, the same goes for the map $\mathbb C[X] \to A$.
	In particular, it follows that $A$ is discrete.
	It is then enough to verify that
	\[ \pi_0(A) \simeq (\mathbb C[X]\an)\alg . \]
	This follows because
	\[ \pi_0( ( \mathbb C[S^n]\an )\alg ) \simeq (\pi_0( \mathbb C[S^n]\an ) )\alg \simeq ( ( \pi_0 \mathbb C[S^n] )\an )\alg \simeq ( \mathbb C[\pi_0(S^n)]\an )\alg . \]
	Indeed, since $n > 0$, $\pi_0(S^n)$ is reduced to a single point.
	Notice that the first equivalence above is a consequence of the compatibility of the pregeometry $\cTank$ with $0$-truncations (see \cite[Proposition 11.4]{DAG-IX}), the second equivalence follows combining Lemmas \ref{lem:local_analytification_affine_space} and \ref{lem:analytification_discrete} and \cref{prop:relative_spectrum_truncated_objects}.
	The last equivalence follows from the universal properties of the free algebra and the $\pi_0$ functor.
	
	Therefore, the same proof of the case $n = 0$ applies.
\end{proof}

\begin{cor} \label{cor:analytification_flat}
	Let $X \coloneqq (\cX, \cO_X)$ be a complex derived \DM stack locally almost of finite presentation.
	Let $X\an \coloneqq (\cX\an, \cO_{X\an}) \in \dAnc$ be its analytification.
	Then the canonical map
	\[ p \colon (\cX\an, \cO_{X\an}) \to (\cX, \cO_X) \]
	is flat.
\end{cor}

\begin{proof}
	The question is local on $\cX\an$ and can therefore be tested on the points of $\cX\an$.
	If $x_* \colon \cS \leftrightarrows \cX\an \colon x\inv$ is a geometric point, then $x\inv \cO_{X\an} \in \AnRing_{\mathbb C}$ and $x\inv p\inv \cO_X \in \CRing_{\mathbb C}^{\mathrm{aug}}$.
	Furthermore, \cref{lem:local_analytification_affine_space} shows that
	\[ x\inv \cO_{X\an} \simeq ( x\inv p\inv \cO_X )\an . \]
	Therefore, the conclusion follows from \cref{thm:local_analytification_flat}.
\end{proof}

\section{Grauert theorem for derived complex analytic stacks} \label{sec:Grauert}

In this section, we deduce from \cite[Theorem 5.20]{Porta_Yu_Higher_analytic_stacks_2014} the Grauert theorem for proper morphism of derived analytic Artin stacks.
Let us start by introducing the objects we are interested in. \\

Denote by $\bP_\mathrm{sm}$ the collection of smooth morphisms of derived analytic spaces (see \cref{def:differential_properties}).
Then $(\dStn, \tauan, \bP_\mathrm{sm})$ forms a geometric context in the sense of \cite[Definition 2.2]{Porta_Yu_Higher_analytic_stacks_2014}.

\begin{defin}
	A derived analytic Artin stack is a geometric stack with respect to the context $(\dStn, \tauan, \bP_\mathrm{sm})$ in the sense of \cite[Definition 2.8]{Porta_Yu_Higher_analytic_stacks_2014}.
\end{defin}

Notice that the inclusion
\[ j \colon (\Stn, \tauan, \bP_\mathrm{sm}) \to (\dStn, \tauan, \bP_\mathrm{sm}) \]
is a morphism of geometric contexts (in the sense of \cite[Definition 2.23]{Porta_Yu_Higher_analytic_stacks_2014}).
Observe furthermore that if $X = (\cX, \cO_X) \in \dStn$ is a derived Stein space and $F_X = \phi(X)$ is its functor of points, then
\[ j^s(F_X) \coloneqq F_X \circ j \]
coincides with the functor of points of the ordinary Stein space $\trunc(X) \coloneqq (\cX,\pi_0(\cO_X))$.
It follows from \cite[Proposition 2.25]{Porta_Yu_Higher_analytic_stacks_2014} that the restriction functor
\[ j^s \colon \St(\dStn, \tauan) \to \St(\Stn, \tauan) \]
takes geometric stacks to geometric stacks.
As $j_s$ does the same in virtue of \cite[Lemma 2.24]{Porta_Yu_Higher_analytic_stacks_2014}, we conclude that the functor
\[ j_s j^s \colon \St(\dStn, \tauan) \to \St(\dStn, \tauan) \]
takes geometric stacks to geometric stacks.
We refer to this functor as the \emph{truncation functor}, and we denote it by $\trunc$.

\begin{defin} \label{def:proper_morphism}
	Let $f \colon X \to Y$ be a morphism of derived analytic Artin stacks.
	We say that $f$ is \emph{proper} if its truncation $\trunc(f) \colon \trunc(X) \to \trunc(Y)$ is proper in the sense of \cite[Definition 4.8]{Porta_Yu_Higher_analytic_stacks_2014}.
\end{defin}

Finally, we introduce the $\infty$-category of coherent sheaves on a derived analytic Artin stack.
We start with the case of a derived analytic space.

\begin{defin}
	Let $X = (\cX, \cO_X)$ be a derived analytic space.
	We define
	\[ \cO_X \Mod \coloneqq \cO_X\alg \Mod , \]
	where the latter is the stable $\infty$-category of $\DAb$-valued sheaves on $\cX$ equipped with a $\cO_X\alg$-module structure.
	We equip $\cO_X \Mod$ with the standard $t$-structure introduced in \cite[Proposition 2.1.3]{DAG-VIII}.
	We denote by $\Coh(X)$ the full subcategory of $\cO_X \Mod$ spanned by those $\cO_X$-modules $\cF$ such that each $\pi_i(\cF)$ is a coherent sheaf over $\pi_0(\cO_X\alg)$.
\end{defin}

Notice that $\Coh(X)$ is a stable subcategory of $\cO_X \Mod$, and that the $t$-structure on $\cO_X\Mod$ restricts to a $t$-structure on $\Coh(X)$.
In particular, the notations $\Coh^-(X)$, $\Coh^+(X)$ and $\Cohb(X)$ are defined.
Following \cite{Lurie_Higher_algebra}, we use \emph{homological} convention.
The next result is elementary but of fundamental importance:

\begin{lem} \label{lem:coh_heart}
	Let $X$ be a derived analytic Artin stack.
	The canonical map $j \colon \trunc(X) \to X$ induces a $t$-exact functor
	\[ j_* \colon \Coh(\trunc(X)) \to \Coh(X) . \]
	Furthermore, it induces an equivalence
	\[ j_* \colon \Cohh(\trunc(X)) \simeq \Cohh(X) . \]
\end{lem}

\begin{proof}
	Notice that the $t$-structure on $\Coh(X)$ allows to prove the smooth descent property of $X \mapsto \Coh(X)$ exactly as in the same way of the algebraic setting.\footnote{See \cite[\S 6]{DAG-VII}.}
	In particular, we are reduced to prove this lemma when $X$ is a derived analytic space.
	In this case, it is a direct consequence of \cite[Remark 2.1.5]{DAG-VIII}.
\end{proof}

We can extend the definition of $\Coh(X)$ to a derived analytic Artin stack exactly in the same way it is done for underived analytic Artin stacks.
We refer to \cite[\S 5.1]{Porta_Yu_Higher_analytic_stacks_2014} for a detailed explanation, and we do not repeat it here.

We finally have:

\begin{thm} \label{thm:Grauert_theorem}
	Let $f \colon X \to Y$ be a proper morphism of derived analytic Artin stacks.
	Then the derived pushforward
	\[ f_* \colon \cO_X \Mod \to \cO_Y \Mod \]
	restricts to a functor
	\[ f_* \colon \Coh^-(X) \to \Coh^-(Y) . \]
\end{thm}

\begin{proof}
	Let $\cC$ be the full subcategory of $\Cohb(X)$ spanned by those $\cF$ such that $f_*(\cF) \in \Cohb(Y)$.
	Notice that:
	\begin{enumerate}
		\item $\cC$ is closed under loops, suspensions and extensions. This is because $f_*$ is an exact functor of stable $\infty$-categories.
		\item $\cC$ contains $\Cohh(X)$.
		Indeed, consider the commutative diagram
		\[ \begin{tikzcd}
			\trunc(X) \arrow{r}{\trunc(f)} \arrow{d}{i} & \trunc(Y) \arrow{d}{j} \\
			X \arrow{r}{f} & Y .
		\end{tikzcd} \]
		In virtue of \cref{lem:coh_heart}, we know that $i_*$ and $j_*$ are $t$-exact and that they induce equivalences on the hearts of the $t$-structures.
		Given $\cF \in \Cohh(X)$, we can therefore write $\cF \simeq i_*(\cG)$, where $\cG \in \Cohh(\trunc(X))$.
		As a consequence, we obtain
		\[ f_*(\cF) \simeq f_*( i_*(\cG)) \simeq j_*( \trunc(f)_*(\cG) ) . \]
		Since $j_*$ is $t$-exact and respect coherent sheaves, we are reduced to prove that $\trunc(f)_*(\cG) \in \Coh^-(Y)$.
		This follows directly from \cite[Theorem 5.20]{Porta_Yu_Higher_analytic_stacks_2014}.
	\end{enumerate}
	These two points together imply that $\Cohb(X) \subseteq \cC$.
	To complete the proof, it is now enough to prove that an object $\cF \in \Coh^-(X)$ belongs to $\cC$ if and only if $\tau_{\le n}\cF$ belongs to $\cC$ for every $n \le 0$.
	Consider the fiber sequence
	\[ \tau_{\ge n+1} \cF \to \cF \to \tau_{\le n} \cF .  \]
	Applying $f_*$ yields the fiber sequence
	\[ f_*( \tau_{\ge n+1} \cF) \to f_* (\cF) \to f_*( \tau_{\le n} \cF) . \]
	Since $f_*$ is right $t$-exact, we see that $f_*(\tau_{\le n} \cF) \in \Coh^{\le n}(X)$.
	As a consequence, there are isomorphisms
	\[ \rH^i( f_*(\tau_{\ge n+1} \cF )) \simeq \rH^i( f_*(\cF) ) \]
	for every $i \ge n + 1$. Since $\tau_{\ge n+1} \cF \in \Cohb(X)$ and since this argument works for every $n \le 0$, the conclusion follows.
\end{proof}

\section{GAGA theorems} \label{sec:GAGA}

We now turn to the main results of this paper.

\begin{thm} \label{thm:GAGA1}
	Let $f \colon X \to Y$ be a proper morphism of derived complex Artin stacks locally almost of finite presentation.\footnote{Just as in \cref{def:proper_morphism}, a morphism of derived Artin stacks $f$ is said to be proper if its truncation $\trunc(f)$ is proper (see for example \cite[Definition 4.8]{Porta_Yu_Higher_analytic_stacks_2014}). It follows from \cite[Proposition 6.3]{Porta_Yu_Higher_analytic_stacks_2014} that proper morphisms are preserved under analytification.}
	Then the diagram
	\[ \begin{tikzcd}
		\Coh^-(X) \arrow{r}{(-)\an} \arrow{d}{f_*} & \Coh^-(X\an) \arrow{d}{f\an_*} \\
		\Coh^-(Y) \arrow{r}{(-)\an} & \Coh^-(Y\an)
	\end{tikzcd} \]
	commutes.
\end{thm}

\begin{proof}
	Since the diagram
	\[ \begin{tikzcd}
		\Coh^-(X) \arrow{r}{(-)\an} & \Coh^-(X\an) \\
		\Coh^-(Y) \arrow{r}{(-)\an} \arrow{u}[swap]{f^*} & \Coh^-(Y\an) \arrow{u}[swap]{(f\an)^*}
	\end{tikzcd} \]
	commutes, there is for every $\cF \in \Coh^-(X)$ a natural morphism
	\[ \eta_\cF \colon (f_*(\cF))\an \to f\an_*(\cF\an) . \]
	Let $\cC$ be the full subcategory of $\Coh^-(X)$ spanned by those $\cF$ such that $\eta_\cF$ is an equivalence.
	Since $(-)\an$, $f_*$ and $f\an_*$ are exact functors of stable $\infty$-categories, we see that $\cC$ is a stable subcategory of $\Coh^-(X)$.
	Furthermore, reasoning as in the proof of \cref{thm:Grauert_theorem}, we can use \cref{lem:coh_heart} and \cite[Theorem 7.1]{Porta_Yu_Higher_analytic_stacks_2014} to conclude that $\cC$ contains $\Cohh(X)$.
	It follows that $\cC$ contains $\Cohb(X)$ as well.
	Using the smooth descent of $\Coh^-(X)$ and the flatness result provided by \cref{cor:analytification_flat}, we conclude that the functor
	\[ (-)\an \colon \Coh^-(X) \to \Coh^-(X\an) \]
	is $t$-exact.
	We use this fact to prove that an $\cC$ coincides with $\Coh^-(X)$.
	Fix $\cF \in \Coh^-(X)$ and consider the fiber sequence
	\[ \tau_{\ge n} \cF \longrightarrow \cF \longrightarrow \tau_{\le n - 1} \cF . \]
	It induces a morphism of fiber sequences
	\[ \begin{tikzcd}
		( f_*( \tau_{\ge n} \cF ) )\an \arrow{r} \arrow{d} & ( f_* (\cF) )\an \arrow{r} \arrow{d} & ( f_*( \tau_{\le n - 1} \cF ) )\an \arrow{d} \\
		f\an_*( \tau_{\ge n} \cF\an ) \arrow{r} & f\an_*(\cF\an) \arrow{r} & f\an_*(\tau_{\le n - 1} \cF\an) .
	\end{tikzcd} \]
	Since $(-)\an$ is $t$-exact, we see that both $( f_*( \tau_{\le n-1} \cF) )\an$ and $f\an_*(\tau_{\le n-1} \cF\an)$ belong to $\Coh^{\le n-1}(X)$.
	As a consequence, we deduce that there are isomorphisms
	\[ \pi_i( ( f_*( \tau_{\ge n +1} \cF ) )\an ) \simeq \pi_i( ( f_* (\cF) )\an ), \qquad \pi_i( f\an_*( \tau_{\ge n+1} \cF\an ) ) \simeq \pi_i(f\an_*(\cF\an)) \]
	for every $i \ge n$.
	Since $\tau_{\ge n} \cF$ belongs to $\Cohb(X)$, and hence to $\cC$, we conclude that $\eta_\cF$ induces an isomorphism
	\[ \pi_i( f_*(\cF)\an ) \simeq \pi_i( f\an_*(\cF\an) ) \]
	for every $i \ge n$.
	Repeating this argument for every $n \le 0$, we finally conclude that $\cF \in \cC$, thus achieving the proof.
\end{proof}

\begin{thm} \label{thm:GAGA2}
	Let $X$ be a proper derived Artin stack locally almost of finite presentation over $\mathbb C$.
	Then the analytification functor induces an equivalence
	\[ \Coh(X) \simeq \Coh(X\an) . \]
\end{thm}

\begin{proof}
	We start by proving fully faithfulness.
	For any pair of objects $\cF, \cG \in \Coh(X)$, we let
	\[ \psi_{\cF, \cG} \colon \Map_{\Coh(X)}(\cF, \cG) \to \Map_{\Coh(X\an)}(\cF\an, \cG\an) \]
	be the natural map.
	
	We start by proving that $\psi_{\cF, \cG}$ is an equivalence whenever $\cF, \cG \in \Cohb(X)$.
	Let $\cHom_X(\cF, \cG)$ be the internal hom in $\cO_X \Mod$.
	Notice that
	\[ \cHom_X(\cF, \cG)\an \simeq \cHom_{X\an}(\cF\an, \cG\an) . \]
	Furthermore, if $\cF, \cG \in \Cohb(X)$, then
	\[ \cHom_X(\cF, \cG) \in \Coh^-(X) . \]
	Finally, observe that
	\[ \Map_{\Coh(X)}(\cF, \cG) \simeq \tau_{\ge 0} p_* \cHom_X(\cF, \cG), \quad \Map_{\Coh(X\an)}(\cF\an, \cG\an) \simeq \tau_{\ge 0} p\an_* \cHom_{X\an}(\cF\an, \cG\an) , \]
	where $p \colon X \to \Spec(\mathbb C)$ is the canonical morphism.
	Therefore, \cref{thm:GAGA1} implies that $\psi_{\cF, \cG}$ is an equivalence in this case.
	
	Now let us fix two general $\cF, \cG \in \Coh(X)$.
	Observe that
	\[ \cG \simeq \lim_{n \ge 0} \tau_{\le n} \cG . \]
	Since $(-)\an$ is $t$-exact in virtue of \cref{cor:analytification_flat}, we have a canonical equivalence
	\[ \cG\an \simeq \lim_{n \ge 0} \tau_{\le n}(\cG\an) \simeq \lim_{n \ge 0} (\tau_{\le n} \cG)\an . \]
	Consider the following commutative square
	\[ \begin{tikzcd}
		\Map_{\Coh(X)}(\cF, \cG) \arrow{d}{\psi_{\cF, \cG}} \arrow{r} & \lim_{n \ge 0} \Map_{\Coh(X)}(\cF, \tau_{\le n} \cG) \arrow{d}{\psi_{\cF, \tau_{\le n} \cG}} \\
		\Map_{\Coh(X\an)}(\cF\an, \cG\an) \arrow{r} & \lim_{n \ge 0} \Map_{\Coh(X\an)}(\cF\an, \tau_{\le n} \cG\an) .
	\end{tikzcd} \]
	As the horizontal morphisms are equivalences, we conclude that it is enough to check that the morphisms $\psi_{\cF, \tau_{\ge n} \cG}$ is an equivalence for every $n$.
	In other words, we can assume without loss of generality that $\cG \in \Coh^-(X)$.
	On the other hand, we can write
	\[ \cF \simeq \colim_{n \le 0} \tau_{\ge n} \cF , \quad \cF\an \simeq \colim_{n \le 0} (\tau_{\ge n} \cF)\an . \]
	The same argument given above implies then that we can assume without loss of generality that $\cF \in \Coh^+(X)$.
	Suppose $\cF \in \Coh^{\ge n}(X)$ and $\cG \in \Coh^{\le m}(X)$.
	Then using \cite[1.2.1.5]{Lurie_Higher_algebra} we have
	\[ \Map_{\Coh(X)}(\cF, \cG) \simeq \Map_{\Coh(X)}( \tau_{\le m} \cF, \cG ) \simeq \Map_{\Coh(X)}(\tau_{\le m} \cF, \tau_{\ge n} \cG) \]
	and, similarly,
	\[ \Map_{\Coh(X\an)}(\cF\an, \cG\an) \simeq \Map_{\Coh(X\an)}( \tau_{\le m} \cF\an, \cG\an ) \simeq \Map_{\Coh(X)}(\tau_{\le m} \cF\an, \tau_{\ge n} \cG\an) . \]
	Since both $\tau_{\le m} \cF$ and $\tau_{\ge n} \cG$ belongs to $\Cohb(X)$, the first part of the proof implies that the map
	\[ \Map_{\Coh(X)}(\tau_{\le m} \cF, \tau_{\ge n} \cG) \to \Map_{\Coh(X)}(\tau_{\le m} \cF\an, \tau_{\ge n} \cG\an) \]
	is an equivalence.
	As a consequence, $\psi_{\cF, \cG}$ is an equivalence as well, thus completing the proof of the fully faithfulness of $(-)\an$. \\
	
	We now prove that $(-)\an$ is essentially surjective.
	Let us denote by $\cC$ the essential image of $(-)\an$.
	Since $(-)\an$ is fully faithful, $\cC$ is a stable subcategory of $\Coh(X\an)$.
	Indeed, given a fiber sequence
	\[ \cF_1 \xrightarrow{\alpha_1} \cF_2 \xrightarrow{\alpha_2} \cF_3 \]
	such that any two sheaves belong to $\cC$, then so does the third one.
	To see this, up to rotating the fiber sequence, we can assume $\cF_2, \cF_3 \in \cC$.
	We can therefore find $\cG_2, \cG_3 \in \Coh(X)$ such that $\cG_2\an \simeq \cF_2$ and $\cG_3\an \simeq \cF_3$.
	Since $(-)\an$ is fully faithful, we can find $\beta_2 \colon \cG_2\to \cG_3$ such that $\beta_2\an \simeq \alpha_2$.
	Set
	\[ \cG_1 \coloneqq \fib\left( \cG_2 \xrightarrow{\beta_2} \cG_3 \right) . \]
	Since $(-)\an$ is exact, we obtain a natural equivalence $\cG_1\an \simeq \cF_1$.
	In other words, $\cF_1 \in \cC$.
	
	We now observe that $\cC$ contains $\Cohh(X\an)$.
	Indeed, fix $\cF \in \Cohh(X\an)$.
	Let $j \colon \trunc(X) \to X$ be the canonical morphism and let $j\an \colon \trunc(X)\an \to X\an$ be its analytification.
	Combining Propositions \ref{prop:relative_spectrum_and_truncations} and \ref{prop:relative_spectrum_truncated_objects} we can identify $j\an$ with the canonical morphism
	\[ \trunc(X\an) \longrightarrow X\an . \]
	Under the equivalence
	\[ \Cohh(X\an) \simeq \Cohh(\trunc(X\an)) \]
	guaranteed by \cref{lem:coh_heart}, we can find $\widetilde{\cF} \in \Cohh(\trunc(X\an))$ such that $j\an_* ( \widetilde{\cF} ) \simeq \cF$.
	We can now invoke \cite[Theorem 7.4]{Porta_Yu_Higher_analytic_stacks_2014} to deduce the existence of $\cG \in \Coh(\trunc(X))$ such that $\cG\an \simeq \widetilde{\cF}$.
	Therefore, \cref{thm:GAGA1} implies that
	\[ j_*(\cG)\an \simeq j\an_*(\cG\an) \simeq j\an_* ( \widetilde{\cF} ) \simeq \cF . \]
	As a consequence, we deduce that $\cC$ contains $\Cohb(X\an)$.
	
	Let now $\cF \in \Coh^+(X\an)$.
	We can write
	\[ \cF \simeq \lim_{n \ge 0} \tau_{\le n} \cF , \]
	and for each $n \ge 0$, $\tau_{\le n} \cF \in \Cohb(X\an)$.
	We can therefore find $\cG_n \in \Coh(X)$ such that $\cG_n\an \simeq \tau_{\le n} \cF$.
	The fully faithfulness of $(-)\an$ allows to find maps $i_n \colon \cG_n \to \cG_{n-1}$ for every $n$ such that $i_n\an$ is equivalent to the map $\tau_{\le n} \cF \to \tau_{\le n-1} \cF$.
	Furthermore, fully faithfulness of $(-)\an$ allows to assemble these maps in a diagram $G \colon \mathbb Z_{\le 0} \to \Coh(X)$.
	Since $(-)\an$ is $t$-exact and conservative, we conclude that $i_n$ induces an equivalence
	\[ \tau_{\le n-1} \cG_n \simeq \cG_{n-1} \]
	for every $n$.
	Since the $t$-structure on $\Coh(X)$ is complete, we conclude that the limit $\cG$ of the diagram $G$ exists and that it satisfies $\tau_{\le n} \cG \simeq \cG_n$.
	As a consequence, we obtain
	\[ \cG\an \simeq \lim_{n \ge 0} ( \tau_{\le n} \cG )\an \simeq \lim_{n \ge 0} \tau_{\le n} \cF \simeq \cF . \]
	Therefore $\cC$ contains $\Coh^+(X\an)$.
	A dual argument shows that it also contains $\Coh^-(X\an)$, thus completing the proof.
\end{proof}

\section{Applications} \label{sec:applications}

We now provide two applications of \cref{thm:GAGA2}: on one hand, we show that a derived analytic space is algebraizable if and only if its truncation is.
On the other hand, we prove that the local structure of a (sufficiently nice) analytic moduli problem is governed by a differential graded Lie algebra, thus extending the main result of \cite{DAG-X} to the analytic setting.

Both these applications use our GAGA theorems in an essential way.
However, they also rely on other properties of derived analytic spaces.
Most notably, we will use the analytic cotangent complex and the Postnikov tower decomposition.
Both these tools have been introduced in the author's Ph.D.\ thesis and can be found in \cite{Porta_Yu_Representability}.

\subsection{Algebraizability of derived complex analytic spaces}

Our first application is the following:

\begin{prop} \label{prop:algebraizable}
	Let $X = (\cX, \cO_X)$ be a proper derived analytic space.
	Then $X$ is algebraizable if and only if $\trunc(X)$ is algebraizable.
\end{prop}

\begin{proof}
	Suppose first that there exists a derived \DM stack $Y$ locally almost of finite presentation over $\Spec(\mathbb C)$ such that $Y\an \simeq X$.
	\Cref{prop:relative_spectrum_and_truncations} implies that
	\[ \trunc(X) \simeq \trunc(Y\an) \simeq \trunc(Y)\an , \]
	and therefore $\trunc(X)$ is algebraizable.
	
	Suppose now that $\trunc(X) \simeq Y_0\an$ for some derived \DM stack $Y_0 = (\cY, \cO_{Y_0})$.
	We start by constructing inductively on $n$ a sequence of sheaves of derived rings
	\[ \cdots \to \cO_{Y_{n+1}} \to \cO_{Y_n} \to \cdots \to \cO_{Y_0} \]
	such that
	\begin{enumerate}
		\item the morphism $\cO_{Y_{n+1}} \to \cO_{Y_n}$ induces an equivalence $\tau_{\le n} \cO_{Y_{n+1}} \simeq \cO_{Y_n}$;
		\item set $Y_n \coloneqq (\cY, \cO_{Y_n})$. Then $Y_n\an \simeq \mathrm t_{\le n} X \coloneqq (\cX, \tau_{\le n}\cO_X)$.
	\end{enumerate}
	Suppose first that this sequence has already been built.
	Let
	\[ \cO_Y \coloneqq \lim_{n \ge 0} \cO_{Y_n} , \]
	and set $Y \coloneqq (\cY, \cO_Y)$.
	Then condition (1) guarantees that $\tau_{\le n} \cO_Y \simeq \cO_{Y_n}$.
	Therefore \cref{prop:relative_spectrum_and_truncations} implies that
	\[ \mathrm t_{\le n} (Y\an) \simeq (\mathrm t_{\le n} Y)\an \simeq \mathrm t_{\le n} X . \]
	Since derived analytic \DM stacks are convergent (see \cite[Lemma 7.7]{Porta_Yu_Representability}), we obtain in this way a map $Y\an \to X$ that induces an equivalence on each truncation. It follows that this map is an equivalence, thus showing that $X$ is algebraizable.

	We are left to build the aforementioned sequence.
	When $n = 0$, condition (1) is empty and condition (2) holds by assumption.
	Suppose now that $\cO_{Y_n}$ has already been constructed in such a way that both conditions hold.
	Using \cite[Corollary 5.42]{Porta_Yu_Representability} we can find an analytic derivation
	\[ d \colon \anL_{\mathrm t_{\le n} X} \to \pi_{n + 1}(\cO_X\alg)[n+2] \]
	such that the square
	\[ \begin{tikzcd}
		\tau_{\le n+1} \cO_X \arrow{r} \arrow{d} & \tau_{\le n} \cO_X \arrow{d}{\eta_d} \\
		\tau_{\le n} \cO_X \arrow{r}{\eta_0} & \tau_{\le n} \cO_X \oplus \pi_{n+1}(\cO_X\alg)[n+2]
	\end{tikzcd} \]
	is a pullback square in $\AnRing_{\mathbb C}(\cX)$, where $\eta_d$ classifies $d$ and $\eta_0$ classifies the zero derivation.
	Set $\cF \coloneqq \pi_{n+1}(\cO_X\alg)[n+2]$ and observe that it is a coherent sheaf over $\mathrm t_{\le n} X$.
	We can therefore invoke \cref{thm:GAGA2} to deduce the existence of a coherent sheaf $\cG \in \Coh(Y_n)$ such that $\cG\an \simeq \cF$.
	Furthermore, since $(-)\an$ is $t$-exact and conservative, we deduce that $\cG$ is concentrated in homological degree $n+2$.
	We now use \cite[Theorem 5.20]{Porta_Yu_Representability} to deduce the equivalence
	\[ \mathbb L_{Y_n}\an \simeq \anL_{\mathrm t_{\le n} X} . \]
	We can therefore use \cref{thm:GAGA2} once more to deduce the existence of an algebraic derivation
	\[ \delta \colon \mathbb L_{Y_n} \to \cG \]
	such that $\delta\an \simeq d$.
	We now define $\cO_{Y_{n+1}}$ to be the pullback
	\[ \begin{tikzcd}
		\cO_{Y_{n+1}} \arrow{r} \arrow{d} & \cO_{Y_n} \arrow{d}{\eta_{\delta}} \\
		\cO_{Y_n} \arrow{r}{\eta_0} & \cO_{Y_n} \oplus \cG .
	\end{tikzcd} \]
	We are left to check that $Y_{n+1}\an \simeq \mathrm t_{\le n+1} X$.
	We claim first that the analytification functor
	\[ (-)\an \colon \CRing_{\mathbb C}(\cY)_{/\cO_{Y_n}} \to \AnRing_{\mathbb C}(\cX)_{/\tau_{\le n} \cO_X} \]
	takes $\cO_{Y_n} \oplus \cG$ to the analytic split square-zero extension $\tau_{\le n} \cO_X \oplus \cF$.
	To see this, we observe that \cite[Lemma 5.15]{Porta_Yu_Representability} produces a canonical morphism
	\[ \alpha \colon ( \cO_{Y_n} \oplus \cG )\an \to \tau_{\le n} \cO_X \oplus \cF . \]
	Since the underlying algebra functor $(-)\alg$ is conservative, it is enough to check that $\alpha\alg$ is an equivalence.
	Notice that we have a commutative triangle
	\[ \begin{tikzcd}
		{} & \cO_{Y_n} \oplus \cG \arrow{dl} \arrow{dr} \\
		((\cO_{Y_n} \oplus \cG)\an)\alg \arrow{rr}{\alpha\alg} & & \tau_{\le n} \cO_X\alg \oplus \cF .
	\end{tikzcd} \]
	The left diagonal map is flat (in virtue of \cref{cor:analytification_flat}), and the right diagonal map is strong.
	It follows that $\alpha\alg$ is strong.
	Since $\cG$ and $\cF$ are in homological degree $n + 2$ and $\cO_{Y_n}\an \simeq \tau_{\le n} \cO_X$, we conclude that $\pi_0(\alpha\alg)$ is an isomorphism.
	Hence $\alpha\alg$ is an isomorphism as well.
	This completes the proof of the claim.
	At this point, the conclusion follows from the fact that both algebraic and analytic derived \DM stacks are infinitesimally cartesian (see \cite{DAG-XIV} for the algebraic case and \cite[Lemma 7.7]{Porta_Yu_Representability} for the analytic case).
\end{proof}

\subsection{Analytic formal moduli problems}

Let $F \colon \dStn\op \to \cS$ be a functor.
We show that, as in the algebraic case, the formal geometry around a point $x \in F(*_{\mathbb C})$ is governed by a differential graded Lie algebra.
This is an analytic analogue of the formal moduli problem correspondence in derived algebraic geometry, first proven by J.\ Pridham \cite{Pridham_Unifying} and subsequently generalized by J.\ Lurie \cite{DAG-X}. \\

We work in the framework of axiomatic deformation theories introduced in \cite{DAG-X}.
After introducing the notion of analytic artinian object, we deduce from our GAGA theorem that the $\infty$-category of analytic artinian objects is equivalent to the one of algebraic artinian objects. The conclusion follows then from \cite[Theorem 2.0.2]{DAG-X}.

We define a deformation context as follows:
\begin{enumerate}
	\item the underlying $\infty$-category is $\AnRing_{\mathbb C}$;
	\item as family of spectra in $\AnRing_{\mathbb C}$ we take the one element family
	\[ \mathbb C \in \mathrm{Mod}_{\mathbb C} \simeq \Sp(\AnRing_{\mathbb C}) . \]
	Notice that this equivalence is consequence of \cite[Theorem 4.11]{Porta_Yu_Representability}.
\end{enumerate}
We denote by $\mathrm{AnArt}_{\mathbb C}$ the $\infty$-category of small objects with respect to this deformation context (see \cite[Definition 1.1.8]{DAG-X}).
We refer to $\mathrm{AnArt}_{\mathbb C}$ as the \emph{$\infty$-category of analytic Artinian objects}.
Our second application of the GAGA theorems is then the following:

\begin{prop}
	The underlying algebra functor $(-)\alg \colon \AnRing_{\mathbb C} \to \CRing_{\mathbb C}^{\mathrm{aug}}$ restricts to an equivalence of $\infty$-categories
	\[ \mathrm{AnArt}_{\mathbb C} \simeq \mathrm{Art}_{\mathbb C} . \]
\end{prop}

\begin{proof}
	We start by remarking that \cite[Lemma 5.15]{Porta_Yu_Representability} and the fact that $(-)\alg$ commutes with limits imply that $(-)\alg$ restricts to a functor
	\[ (-)\alg \colon \mathrm{AnArt}_{\mathbb C} \longrightarrow \mathrm{Art}_{\mathbb C} , \]
	which is conservative.
	Notice that \cite[Theorem 6.5]{Porta_Yu_Representability} implies that derived analytic spaces are cartesian.
	As a consequence, we deduce that the analytification functor
	\[ (-)\an \colon \CRing_{\mathbb C}^{\mathrm{aug}} \to \AnRing_{\mathbb C} \]
	commutes with pullbacks
	\[ \begin{tikzcd}
		A' \arrow{r} \arrow{d} & B' \arrow{d}{f} \\
		A \arrow{r}{g} & B
	\end{tikzcd} \]
	where both $f$ and $g$ are surjective on $\pi_0$.
	In particular, $(-)\an$ restricts to a functor
	\[ (-)\an \colon \mathrm{Art}_{\mathbb C} \longrightarrow \mathrm{AnArt}_{\mathbb C} . \]
	Notice that \cref{prop:algebraizable} implies that $(-)\an$ is essentially surjective.
	To complete the proof it is then enough to check that for every $A \in \mathrm{Art}_{\mathbb C}$ the unit
	\[ \eta \colon A \longrightarrow (A\an)\alg \]
	is an equivalence.
	We know from \cref{thm:local_analytification_flat} that this map is flat.
	It is therefore enough to check that $\pi_0(\eta)$ is an equivalence.
	Since both $\pi_0(A)$ and $\pi_0((A\an)\alg)$ are Artinian, they are isomorphic to their formal completions, and hence they are isomorphic to each other.
\end{proof}

\begin{cor} \label{cor:analytic_FMP}
	Let $X$ be a derived analytic space and let $x \in X$ be a point.
	Then the shifted tangent space $\mathbb T_x X[-1]$ admits a canonical dg Lie algebra structure that completely determines the formal geometry of $X$ around $x$.
\end{cor}

\section{Appendix: complements on pregeometries} \label{sec:appendix}

We collect in this appendix a certain number of results that are used throughout the main body of the paper and that we could not locate in the literature.

\subsection{Local structures} \label{subsec:local_structures}

The category of local rings is not presentable. Indeed, it is not closed under products, nor under coproducts.
The situation changes if we restrict ourselves to the category of local rings with fixed residue field.

A similar phenomenon happens for a general pregeometry: if $\cT$ is a pregeometry and $\cX$ is an $\infty$-topos, then $\Strloc_\cT(\cX)$ is not presentable $\infty$-category, but for any fixed $\cO \in \Strloc_\cT(\cX)$, the $\infty$-category $\Strloc_\cT(\cX)_{/\cO}$ \emph{is} presentable.

This is a useful fact that plays a significant role in developing derived analytic geometry.
Our current goal is to provide a proof of this fact and to deduce some consequences.

\begin{defin}
	Let $\cT$ be a pregeometry.
	We denote by $\cT_{\mathrm{d}}$ the pregeometry having the same underlying $\infty$-category of $\cT$, whose admissible morphisms are isomorphisms and whose topology is the trivial one. We refer to $\cT_{\mathrm d}$ as the \emph{discrete pregeometry underlying $\cT$}.
	
	We denote by $\cT_{\mathrm{sd}}$ the pregeometry having the same underlying $\infty$-category and the same admissible morphisms of $\cT$, and whose topology is the trivial one.
	We refer to $\cT_{\mathrm{sd}}$ as the \emph{semi-discrete pregeometry underlying $\cT$}.
\end{defin}

Let $\cT$ be a pregeometry.
The identity of $\cT$ induces a transformation of pregeometries
\[ \varphi \colon \cT_{\mathrm{sd}} \to \cT . \]
Fix an $\infty$-topos $\cX$.
Composition with $\varphi$ induces a well defined functor
\[ \Strloc_\cT(\cX) \to \Strloc_{\cT_{\mathrm{sd}}}(\cX) , \]
which is not an equivalence because in general it is not essentially surjective.
Nevertheless, we have the following result:

\begin{prop} \label{prop:semi_discrete_simplification}
	Let $\cO \in \Strloc_\cT(\cX)$.
	Then the induced functor
	\[ \varphi_* \colon \Strloc_\cT(\cX)_{/\cO} \to \Strloc_{\cT_{\mathrm{sd}}}(\cX)_{/\cO\circ \varphi} \]
	is an equivalence of $\infty$-categories.
\end{prop}

\begin{proof}
	Committing a slight abuse of notation, we denote by $\cT$ the underlying $\infty$-category of both $\cT$ and $\cT_{\mathrm{sd}}$.
	Observe that there are natural functors
	\[ i \colon \Strloc_\cT(\cX)_{/\cO} \to \Fun(\cT, \cX)_{/\cO} , \quad i_{\mathrm{sd}} \colon \Strloc_{\cT_{\mathrm{sd}}}(\cX)_{/\cO \circ \varphi} \to \Fun_\cO(\cT, \cX)_{/\cO} \]
	that are compatible with $\varphi_*$ in the sense that the triangle
	\[ \begin{tikzcd}
		\Strloc_\cT(\cX)_{/\cO} \arrow{rr}{\varphi_*} \arrow{dr}[swap]{i} & & \Strloc_{\cT_{\mathrm{sd}}}(\cX)_{/\cO \circ \varphi} \arrow{dl}{i_{\mathrm{sd}}} \\
		{} & \Fun(\cT, \cX)_{/\cO}
	\end{tikzcd} \]
	commutes.
	Observe that $\Fun(\cT, \cX)_{/\cO}$ fits in the following pullback diagram of $\infty$-categories:
	\[ \begin{tikzcd}
		\Fun(\cT, \cX)_{/\cO} \arrow{r} \arrow{d} & \Fun(\cT \times \Delta^1, \cX) \arrow{d}{\ev_1} \\
		\{\cO\} \arrow{r} & \Fun(\cT, \cX) ,
	\end{tikzcd} \]
	where $\ev_1$ is induced by composition with $\cT \times \{1\} \hookrightarrow \cT \times \Delta^1$.
	Recall from \cite[2.4.7.12]{HTT} that $\ev_1$ is a Cartesian fibration.
	We can therefore use the fiber sequence provided by \cite[2.4.4.2]{HTT} to conclude that the functors $i$ and $i_{\mathrm{sd}}$ are fully faithful.
	It follows that $\varphi_*$ is fully faithful as well.
	
	We are therefore left to prove that $\varphi_*$ is essentially surjective.
	Let $\alpha \colon \cO' \to \cO$ be an object in $\Strloc_\cT(\cX)_{/\cO}$.
	Unraveling the definitions, we see that it is enough to prove that $\cO'$ takes admissible covers in $\cT$ to effective epimorphisms.
	Let $\{U_i \to U\}_{i \in I}$ be an admissible cover in $\cT$.
	Since $\alpha \colon \cO' \to \cO$ is a local transformation, we see that the diagram in $\cX$
	\[ \begin{tikzcd}
		\coprod_{i \in I} \cO'(U_i) \arrow{r} \arrow{d}{\alpha_{U_i}} & \cO'(U) \arrow{d}{\alpha_U} \\
		\coprod_{i \in I} \cO(U_i) \arrow{r} & \cO(U)
	\end{tikzcd} \]
	is a pullback square.
	The conclusion now follows from the fact that effective epimorphisms in $\infty$-topoi are stable under base change (see \cite[6.2.3.15]{HTT}).
\end{proof}

\begin{prop} \label{prop:augmented_local_structures_sifted_colimits}
	Let $\cT$ be a pregeometry and suppose that the topology on $\cT$ is the trivial one.
	Let $\cO \in \Strloc_\cT(\cX)$.
	Then the natural inclusion
	\[ j \colon \Strloc_\cT(\cX)_{/\cO} \to \Str_\cT(\cX)_{/\cO} \]
	commutes with sifted colimits.
\end{prop}

\begin{proof}
	It is enough to prove that the inclusion
	\[ i \colon \Strloc_\cT(\cX)_{/\cO} \hookrightarrow \Fun(\cT, \cX)_{/\cO} \]
	commutes with sifted colimits.
	Let therefore $I$ be a sifted category and let
	\[ F \colon I \to \Strloc_\cT(\cX)_{/\cO} \]
	be a diagram.
	For $i \in I$, set $\cO_i \coloneqq F(i)$.
	Observe that the composition
	\[ \begin{tikzcd} [column sep = small]
		I \arrow{r}{F} & \Strloc_\cT(\cX)_{/\cO} \arrow{r}{i} & \Fun(\cT, \cX)_{/\cO} \arrow{r} & \Fun(\cT \times \Delta^1, \cX) \arrow{r}{\ev_1} & \Fun(\cT, \cX)
	\end{tikzcd} \]
	is the constant diagram associated to $\cO$.
	Since sifted categories are weakly contractible, the $\infty$-categorical version of Quillen's theorem A \cite[4.1.3.1]{HTT} implies that the colimit of this diagram is equivalent to $\cO$.
	Since colimits in $\Fun(\cT \times \Delta^1, \cX)$ are computed objectwise, we conclude that the colimit of the composition
	\[ \begin{tikzcd}[column sep = small]
		I \arrow{r}{F} & \Strloc_\cT(\cX)_{/\cO} \arrow{r}{i} & \Fun(\cT, \cX)_{/\cO} \arrow{r} & \Fun(\cT \times \Delta^1, \cX)
	\end{tikzcd} \]
	lies in $\Fun(\cT, \cX)_{/\cO}$.
	We can therefore represent this colimit as a morphism
	\[ \alpha \colon \cO' \to \cO , \]
	where $\cO' \in \Fun(\cT, \cX)$.
	In order to complete the proof of the claim, it is enough to prove the following two facts:
	\begin{enumerate}[(i)]
		\item the morphism $\alpha$ is local, and
		\item $\cO'$ commutes with finite products and admissible pullbacks.
	\end{enumerate}
	Since sifted colimits commute with finite products, and since each $\cO_i$ is a $\cT$-structure, we conclude that $\cO'$ commutes with finite products.
	Let now $U \to V$ be an admissible morphism in $\cT$.
	Then for every $i \in I$, the diagram
	\[ \begin{tikzcd}
		\cO_i(U) \arrow{r} \arrow{d} & \cO_i(V) \arrow{d} \\
		\cO(U) \arrow{r} & \cO(V)
	\end{tikzcd} \]
	is a pullback square in $\cX$.
	The descent property for $\infty$-topoi guarantees that the the above square remains a pullback after passing to the colimit with respect to $i \in I$.
	In other words, we see that $\alpha$ is a local morphism.
	We are left to prove that $\cO'$ commutes with admissible pullbacks.
	Let
	\[ \begin{tikzcd}
		U' \arrow{r}{f'} \arrow{d} & V' \arrow{d} \\
		U \arrow{r}{f} & V
	\end{tikzcd} \]
	be an admissible pullback in $\cT$.
	In particular, both $f$ and $f'$ are admissible.
	Consider the following commutative cube
	\[ \begin{tikzcd} [column sep = small, row sep = small]
		{} & \cO'(U') \arrow{rr} \arrow{dl} \arrow{dd} & & \cO'(V') \arrow{dl} \arrow{dd} \\
		\cO'(U) \arrow[crossing over]{rr} \arrow{dd} & & \cO'(V) \\
		{} & \cO(U') \arrow{rr} \arrow{dl} & & \cO(V') \arrow{dl} \\
		\cO(U) \arrow{rr} & & \cO(V) . \arrow[leftarrow,crossing over]{uu}
	\end{tikzcd} \]
	Since $\cO$ is a $\cT$-structure, we see that the bottom square is a pullback.
	Moreover, since $\alpha$ is a local morphism and $f$ and $f'$ are admissible, we see that the squares on the sides are pullback as well.
	In particular, the outer and hte right square in the commutative rectangle
	\[ \begin{tikzcd}
		\cO'(U') \arrow{r} \arrow{d} & \cO'(V') \arrow{r} \arrow{d} & \cO(V') \arrow{d} \\
		\cO'(U) \arrow{r} & \cO'(V) \arrow{r} & \cO(V)
	\end{tikzcd} \]
	are pullback squares.
	It follows from \cite[4.4.2.1]{HTT} that the left square is also a pullback.
	As a consequence, $\cO'$ is a $\cT$-structure.	
\end{proof}

\begin{cor} \label{cor:augmented_local_strutures_presentable}
	Let $\cT$ be a pregeometry, $\cX$ an $\infty$-topos and $\cO \in \Strloc_\cT(\cX)$ a $\cT$-structure.
	The $\infty$-category $\Strloc_\cT(\cX)_{/\cO}$ is a presentable $\infty$-category.
\end{cor}

\begin{proof}
	In virtue of \cref{prop:semi_discrete_simplification}, we can replace $\cT$ with the underlying semi-discrete pregeometry $\cT_{\mathrm{sd}}$.
	Equivalently, we can assume from the very beginning that the topology on $\cT$ is the trivial one.
	In this situation, observe that the $\infty$-category $\Str_\cT(\cX)$ coincides with the full subcategory of $\Fun(\cT, \cX)$ spanned by those functors that preserve products and admissible pullbacks.
	It follows from \cite[5.5.4.19]{HTT} that this category is presentable.
	As a consequence, $\Str_{\cT}(\cX)_{/\cO}$ is presentable as well.
	In order to complete the proof, it is enough to show that $\Strloc_{\cT}(\cX)_{/\cO}$ is an accessible localization of $\Str_\cT(\cX)_{/\cO}$.
	
	We start by remarking that the natural functor
	\[ j \colon \Strloc_{\cT}(\cX)_{/\cO} \to \Str_\cT(\cX)_{/\cO} \]
	is fully faithful.
	Reasoning as in the proof of \cref{prop:semi_discrete_simplification}, we see that we can embed both $\Strloc_\cT(\cX)_{/\cO}$ and $\Str_\cT(\cX)_{/\cO}$ as full subcategories of $\Fun_\cO(\cT \times \Delta^1, \cX)$, and that $j$ is compatible with these embeddings.
	In particular, we see that $j$ is fully faithful.
	Furthermore, \cref{prop:augmented_local_structures_sifted_colimits} shows that $j$ commutes with sifted colimits.
	As a consequence, it is enough to show that $j$ admits a left adjoint
	\[ L \colon \Str_\cT(\cX)_{/\cO} \to \Strloc_\cT(\cX)_{/\cO} . \]
	In order to obtain this, it is enough to show that for every object
	\[ \alpha \colon \cO' \longrightarrow \cO \]
	in $\Str_\cT(\cX)_{/\cO}$ there exists a factorization
	\begin{equation} \label{eq:local_factorization}
		\cO' \xrightarrow{\alpha'} \cO'' \xrightarrow{\alpha''} \cO
	\end{equation}
	such that $\alpha''$ is a morphism in $\Strloc_\cT(\cX)$ and that for every $\beta \colon \widetilde{\cO} \to \cO$ in $\Strloc_\cT(\cX)_{/\cO}$, the map induced by $\alpha'$
	\[ \Map_{/\cO}( \cO'', \widetilde{\cO} ) \to \Map_{/\cO}(\cO', \widetilde{\cO}) \]
	is an equivalence.
	
	Recall from \cite[Theorem 1.3.1]{DAG-V} that there exists a factorization system $(S_L, S_R)$ on $\Str_\cT(\cX)$ such that $S_R$ is the collection of morphisms in $\Strloc_\cT(\cX)$.
	Let $\cD$ be the full subcategory of $\Fun(\Delta^1, \Str_\cT(\cX))$ spanned by the elements of $S_R$.
	Using \cite[5.2.8.19]{HTT}, we see that $\cD$ is a localization of $\Fun(\Delta^1, \Str_\cT(\cX))$.
	Denote by $L_{\Delta^1}$ the left adjoint of the inclusion $\cD \hookrightarrow \Fun(\Delta^1, \Str_\cT(\cX))$.
	Then we can identify $L_{\Delta^1}(\alpha)$ with a factorization of the form \eqref{eq:local_factorization}, where $\alpha''$ belongs to $S_R$.
	The universal property of $L_{\Delta^1}$ guarantees that $\alpha' \colon \cO' \to \cO''$ is universal with respect to this kind of factorization.
	This completes the proof.
\end{proof}

\begin{cor} \label{cor:augmented_local_structures_change_of_pregeometry}
	Let $\varphi \colon \cT' \to \cT$ be a transformation of pregeometries.
	Let $\cX$ be an $\infty$-topos and let $\cO \in \Strloc_\cT(\cX)$.
	Then the functor
	\[ \varphi_* \colon \Strloc_\cT(\cX)_{/\cO} \to \Strloc_{\cT'}(\cX)_{/\cO \circ \varphi} \]
	commutes with limits and sifted colimits.
\end{cor}

\begin{proof}
	Using \cref{prop:semi_discrete_simplification}, we can replace both $\cT$ and $\cT'$ with the underlying semi-discrete pregeometries.
	In other words, we can assume the topologies on $\cT$ and on $\cT'$ to be trivial from the very beginning.
	Consider the commutative diagram
	\[ \begin{tikzcd}
		\Strloc_{\cT}(\cX)_{/\cO} \arrow{r}{\varphi_*} \arrow[hook]{d} & \Strloc_{\cT'}(\cX)_{/\cO \circ \varphi} \arrow[hook]{d} \\
		\Fun(\cT, \cX)_{/\cO} \arrow{r}{\varphi_*} & \Fun(\cT', \cX)_{/\cO \circ \varphi} .
	\end{tikzcd} \]
	As we saw in the proof of \cref{prop:semi_discrete_simplification}, the vertical arrows are fully faithful.
	Furthermore, the proof of \cref{prop:augmented_local_structures_sifted_colimits} shows that they commute with sifted colimits.
	Since limits in $\Strloc_\cT(\cX)_{/\cO}$ and in $\Strloc_{\cT'}(\cX)_{/\cO \circ \varphi}$ can be computed objectwise, we conclude that the vertical functors in the above diagrams commute with limits as well.
	
	At this point, it is enough to observe that the functor
	\[ \varphi_* \colon \Fun(\cT, \cX)_{/\cO} \to \Fun(\cT', \cX)_{/\cO \circ \varphi} \]
	commutes with both limits and colimits.
\end{proof}

\subsection{Truncation of structures} \label{subsec:truncation_structures}

If $A$ is a simplicial commutative ring, the truncations $\tau_{\le n} A$ are fundamental objects in derived algebraic geometry.
All together, they form the \emph{Postnikov tower} of $A$, which is often used in order to reduce the proof of derived statements to the corresponding underived ones.

When working with a general pregeometry, it is desirable to have an analogous notion of truncation.
In general, this is not possible: ultimately, the reason is to be found in the fact that the truncation functor in an $\infty$-topos does not commute with finite limits.
For this reason, J.\ Lurie introduced the notion of \emph{being compatible with $n$-truncations}:

\begin{defin}
	Let $\cT$ be a pregeometry and let $n \ge 0$ be an integer.
	We say that $\cT$ is compatible with $n$-truncations if for every $\infty$-topos $\cX$, every $\cT$-structure $\cO \in \Str_\cT(\cX)$ and every admissible morphism $U \to V$ in $\cT$, the square
	\[ \begin{tikzcd}
		\cO(U) \arrow{r} \arrow{d} & \cO(V) \arrow{d} \\
		\tau_{\le n}( \cO(U) ) \arrow{r} & \tau_{\le n}( \cO(V) )
	\end{tikzcd} \]
	is a pullback.
\end{defin}

It is shown in \cite[Proposition 3.3.3]{DAG-V} that if $\cT$ is a pregeometry compatible with $n$-truncations and $\cO \in \Str_\cT(\cX)$ is a $\cT$-structure, then the composition $\tau_{\le n} \circ \cO$ is again a $\cT$-structure.
We denote this $\cT$-structure by $\tau_{\le n} \cO$.

Given a pregeometry $\cT$, it is often useful to consider the full subcategory of $\RTop(\cT)$ spanned by those $\cT$-structured topoi $X \coloneqq (\cX, \cO_X)$ such that $\cO_X$ is $n$-truncated.
We denote this $\infty$-category by $\RTop_{\le n}(\cT)$.
It is useful to provide a different description of this $\infty$-category, based on the notion of geometry.

A \emph{geometry} is a variation of the notion of pregeometry, also introduced in \cite{DAG-V}.
Every pregeometry $\cT$ has an associated universal geometry $\cG$, that can informally be thought as the free completion of $\cT$ under finite limits (see \cite[\S 3.4]{DAG-V}).
It is characterized by the property that, for every $\infty$-topos $\cX$, the canonical functor $\cT \to \cG$ induces an equivalence
\begin{equation} \label{eq:pregeometry_vs_geometry}
	\Str_\cG(\cX) \xrightarrow{\sim} \Str_\cT(\cX) .
\end{equation}

Given a geometry $\cG$, we can construct a new geometry $\cG_{\le n}$ and a morphism of geometries
\[ \cG \longrightarrow \cG_{\le n} \]
inducing for any $\infty$-topos $\cX$ an equivalence
\begin{equation} \label{eq:n_stub}
	\Str_\cG^{\le n}(\cX) \xrightarrow{\sim} \Str_\cG(\cX) ,
\end{equation}
where the left hand side denotes the full subcategory of $\Str_\cG(\cX)$ spanned by $n$-truncated $\cG$-structures.
Following \cite{DAG-V}, we refer to $\cG_{\le n}$ as an \emph{$n$-stub of $\cG$}.
Combining the equivalences \eqref{eq:pregeometry_vs_geometry} and \eqref{eq:n_stub}, we obtain the equivalences
\[ \RTop(\cT) \simeq \RTop(\cG) , \qquad \RTop_{\le n}(\cT) \simeq \RTop(\cG_{\le n}) . \]

We have the following elementary lemma:

\begin{lem} \label{lem:truncation_as_change_of_geometry}
	Let $\cT$ be a pregeometry compatible with $n$-truncations.
	Let $\cG$ be the associated geometry and let $\cG_{\le n}$ be an $n$-stub of $\cG$.
	Then the change of geometry functor of \cite[Theorem 2.1.12]{DAG-V}
	\[ \Spec_\cG^{\cG_{\le n}} \colon \RTop(\cG) \to \RTop(\cG_{\le n}) \]
	sends $X \coloneqq (\cX, \cO_X)$ to $(\cX, \tau_{\le n} \cO_X)$.
\end{lem}

\begin{proof}
	Let $X \coloneqq (\cX, \cO_X)$ be a $\cG$-structured topos.
	It follows from \cite[Proposition 3.3.3]{DAG-V} that $\tau_{\le n} \cO_X$ is a $\cG$-structure on $\cX$.
	We set $\mathrm t_{\le n} X \coloneqq (\cX, \tau_{\le n} \cO_X)$.
	Since $\tau_{\le n} \cO_X$ is $n$-truncated, \cite[Proposition 1.5.14]{DAG-V} shows that it defines a $\cG_{\le n}$-structure on $\cX$.
	The morphism $\cO_X \to \tau_{\le n} \cO_X$ is a local morphism because $\cG$ is compatible with $n$-truncations.
	Therefore, it defines a well defined morphism $p \colon \mathrm t_{\le n} X \to X$ in $\RTop(\cG)$.
	We claim that for every $Y \coloneqq (\cY, \cO_Y) \in \Top(\cG_{\le n})$, the canonical morphism induced by $p$
	\[ \Map_{\RTop(\cG_{\le n})}(Y, \mathrm t_{\le n} X) \to \Map_{\RTop(\cG)}(Y, X) \]
	is a homotopy equivalence.
	Indeed, we have a commutative diagram of fiber sequences
	\[ \begin{tikzcd}
		\Map_{\Strloc_{\cG_{\le n}}(\cY)}(f\inv \tau_{\le n} \cO_X, \cO_Y) \arrow{r} \arrow{d} & \Map_{\Strloc_{\cG}(\cY)}(f\inv \cO_X, \cO_Y ) \arrow{d} \\
		\Map_{\RTop(\cG_{\le n})}(Y,  \mathrm t_{\le n} X) \arrow{r} \arrow{d} & \Map_{\RTop(\cG)}(Y, X) \arrow{d} \\
		\Map_{\RTop}(\cY, \cX) \arrow[equal]{r} & \Map_{\RTop}(\cY, \cX)
	\end{tikzcd} \]
	where both the fibers are computed over the geometric morphism $f\inv \colon \cX \rightleftarrows \cY \colon f_*$.
	Since the left adjoint $f\inv$ is left exact, \cite[5.5.6.28]{HTT} produces the following equivalence:
	\[ f\inv \tau_{\le n} \cO_{\cX} \simeq \tau_{\le n} f\inv \cO_{\cX} . \]
	Finally, since the functor $\Strloc_{\cG_{\le n}}(\cY) \to \Strloc_{\cG}(\cY)$ is fully faithful (in virtue of \cite[Proposition 1.5.14]{DAG-V}), we see that the top horizontal morphism is a homotopy equivalence.
	The proof is therefore complete.
\end{proof}

\begin{prop} \label{prop:relative_spectrum_and_truncations}
	Let $\varphi \colon \cG' \to \cG$ be a transformation of geometries and suppose that both $\cG'$ and $\cG$ are compatible with $n$-truncations.
	Let $X \coloneqq (\cX, \cO_X) \in \Top(\cG')$.
	Then the canonical morphism
	\[ \mathrm{Spec}^{\cG}_{\cG'}(\cX, \tau_{\le n} \cO_X) \to \mathrm{Spec}^{\cG}_{\cG'}(\cX, \cO_X) \]
	Exhibits $\mathrm{Spec}^{\cG}_{\cG'}(\cX, \tau_{\le n} \cO_X)$ as $n$-truncation of $\mathrm{Spec}^{\cG}_{\cG'}(\cX, \cO_X)$.
	In particular, it induces an equivalence on the underlying $\infty$-topos.
\end{prop}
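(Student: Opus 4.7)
The plan is to split the argument into a formal adjunction reduction, followed by a Beck--Chevalley verification that relies on the explicit construction of the relative spectrum from \cite[§2.2]{DAG-V}.

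By \cref{prop:relative_spectrum_truncated_structured_topoi} applied to both $\cG'$ and $\cG$, the $n$-stub morphisms $\cG' \to \cG'_{\le n}$ and $\cG \to \cG_{\le n}$ induce fully faithful restriction functors $\iota'_n \colon \Top(\cG'_{\le n}) \hookrightarrow \Top(\cG')$ and $\iota_n \colon \Top(\cG_{\le n}) \hookrightarrow \Top(\cG)$ whose right adjoints $T'_n$ and $T_n$ are identified with the structure-sheaf truncation functors $\tau_{\le n}$. Writing $Y \coloneqq \mathrm{Spec}^\cG_{\cG'}(\cX, \cO_\cX) = (\cY, \cO_\cY)$, the canonical morphism in the statement is precisely the image under $\mathrm{Spec}^\cG_{\cG'}$ of the counit $(\cX, \tau_{\le n}\cO_\cX) \to (\cX, \cO_\cX)$ of the adjunction $\iota'_n \dashv T'_n$, and the goal is to identify its source with $\iota_n T_n Y = (\cY, \tau_{\le n}\cO_\cY)$ compatibly with the counit $\iota_n T_n Y \to Y$. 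The commutative square of geometries
\[
\begin{tikzcd}
\cG' \ar[r] \ar[d, "\varphi"'] & \cG'_{\le n} \ar[d, "\varphi_{\le n}"] \\
\cG \ar[r] & \cG_{\le n}
\end{tikzcd}
\]
produces a commutative square of restriction functors between the corresponding $\infty$-categories of structured topoi; taking right adjoints of the two vertical edges yields the formal commutation $T_n \circ \mathrm{Spec}^\cG_{\cG'} \simeq \mathrm{Spec}^{\cG_{\le n}}_{\cG'_{\le n}} \circ T'_n$, and evaluating at $(\cX, \cO_\cX)$ identifies $T_n Y$ with $\mathrm{Spec}^{\cG_{\le n}}_{\cG'_{\le n}}(\cX, \tau_{\le n}\cO_\cX)$.

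The proposition therefore reduces to the Beck--Chevalley equivalence
\[
\iota_n \circ \mathrm{Spec}^{\cG_{\le n}}_{\cG'_{\le n}} \simeq \mathrm{Spec}^\cG_{\cG'} \circ \iota'_n,
\]
since chaining the two identifications gives $\mathrm{Spec}^\cG_{\cG'}(\cX, \tau_{\le n}\cO_\cX) = \mathrm{Spec}^\cG_{\cG'}(\iota'_n T'_n(\cX, \cO_\cX)) \simeq \iota_n T_n Y = (\cY, \tau_{\le n}\cO_\cY)$, which yields at once the desired $n$-truncation statement and, tautologically, the equivalence on underlying $\infty$-topoi. The canonical mate of the commutative square of restrictions produces a natural transformation $\iota_n \circ \mathrm{Spec}^{\cG_{\le n}}_{\cG'_{\le n}} \to \mathrm{Spec}^\cG_{\cG'} \circ \iota'_n$, and to show it is an equivalence it suffices to verify, for each $\bar\cO \in \Str_{\cG'_{\le n}}(\cX)$, that $\mathrm{Spec}^\cG_{\cG'}(\cX, \iota'_n\bar\cO)$ has $n$-truncated structure sheaf and the same underlying topos as $\mathrm{Spec}^{\cG_{\le n}}_{\cG'_{\le n}}(\cX, \bar\cO)$; granting these two points, the mate is forced to be an equivalence by the universal property of the $n$-stub combined with the formal commutation already established.

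The main obstacle is precisely this last verification, for which I would unpack the explicit construction of the relative spectrum in \cite[§2.2]{DAG-V}. At a high level, the underlying $\infty$-topos of $\mathrm{Spec}^\cG_{\cG'}(\cX, \cO_\cX)$ is built out of the admissibility and covering data associated to $\cO_\cX$---information already encoded in $\tau_{\le 0}\cO_\cX$---and is therefore insensitive to the replacement $\cO_\cX \rightsquigarrow \tau_{\le n}\cO_\cX$, which immediately yields the equivalence of underlying topoi asserted in the \enquote{in particular} clause. The structure sheaf of the spectrum is then obtained by a sheafification procedure from $\cO_\cX$ extended along $\varphi$, and the hypothesis that $\cG$ is compatible with $n$-truncations ensures that this sheafification preserves $n$-truncated $\cG$-structures. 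Hence $\mathrm{Spec}^\cG_{\cG'}(\cX, \iota'_n\bar\cO)$ has $n$-truncated structure sheaf, so lies in the essential image of $\iota_n$, and by the formal commutation of the first paragraph it must agree with $\iota_n\mathrm{Spec}^{\cG_{\le n}}_{\cG'_{\le n}}(\cX, \bar\cO)$, completing the proof.
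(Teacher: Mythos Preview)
Your overall architecture matches the paper's exactly: form the commutative square of geometries involving the $n$-stubs, observe that all four induced restriction functors on structured topoi have right adjoints, and conclude the commutation of relative spectra
\[
\mathrm{Spec}^{\cG_{\le n}}_{\cG}\circ\mathrm{Spec}^{\cG}_{\cG'}\;\simeq\;\mathrm{Spec}^{\cG_{\le n}}_{\cG'_{\le n}}\circ\mathrm{Spec}^{\cG'_{\le n}}_{\cG'}.
\]
The paper then simply invokes \cref{prop:relative_spectrum_truncated_structured_topoi} and declares the proof complete. You go one step further and correctly isolate what is still missing: the Beck--Chevalley identification $\iota_n\circ\mathrm{Spec}^{\cG_{\le n}}_{\cG'_{\le n}}\simeq\mathrm{Spec}^\cG_{\cG'}\circ\iota'_n$, or equivalently the assertion that $\mathrm{Spec}^\cG_{\cG'}$ carries $n$-truncated structured topoi to $n$-truncated structured topoi. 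Without this, the formal commutation only pins down $T_n$ of the source, not the source itself.

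The weakness is in your last paragraph. Your proposed verification---that the underlying topos of the relative spectrum depends only on $\tau_{\le 0}\cO_\cX$ and that the ``sheafification procedure'' producing the structure sheaf preserves $n$-truncatedness because $\cG$ is compatible with $n$-truncations---is not substantiated by anything concrete in \cite[§2.2]{DAG-V}, and the construction there is not transparently of this form. In fact the paper itself addresses exactly this Beck--Chevalley question in the very next proposition, \cref{prop:relative_spectrum_truncated_objects}, but only for \emph{schemes} rather than arbitrary structured topoi, and the proof there uses the explicit identification of the relative spectrum of an affine with an absolute spectrum. The remark immediately following that proposition states plainly that the author does not know whether the commutation extends to all of $\Top(\cG')$. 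So your hand-wave is attempting to prove, in full generality, something the paper explicitly leaves open; you should either restrict to the scheme case (which is all that is used downstream) and argue as in \cref{prop:relative_spectrum_truncated_objects}, or acknowledge that the general statement needs a separate argument you have not supplied.
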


\begin{proof}
	Let $\cG' \to \cG'_{\le n}$ and $\cG \to \cG_{\le n}$ be $n$-stubs for $\cG'$ and $\cG$ respectively.
	The universal property defining $n$-stubs, implies the existence of a commutative square of morphism of geometries
	\[ \begin{tikzcd}
	\cG' \arrow{r}{\varphi} \arrow{d} & \cG \arrow{d} \\
	\cG'_{\le n} \arrow{r}{\varphi_n} & \cG_{\le n}
	\end{tikzcd} \]
	Therefore we have
	\[ \mathrm{Spec}^{\cG_{\le n}}_{\cG} \circ \mathrm{Spec}^{\cG}_{\cG'} \simeq \mathrm{Spec}^{\cG_{\le n}}_{\cG'_{\le n}} \circ \mathrm{Spec}^{\cG'_{\le n}}_{\cG'} \]
	Combining this with \cref{lem:truncation_as_change_of_geometry}, we obtain the desired result.
\end{proof}

Recall from \cite[\S 2.3]{DAG-V} the notion of $\cG$-scheme for a geometry $\cG$.
We have:

\begin{prop} \label{prop:relative_spectrum_truncated_objects}
	Let $\cG, \cG'$ be geometries compatible with $n$-truncations.
	Let $\varphi \colon \cG' \to \cG$ be a transformation of geometries and let $\cG' \to \cG'_{\le n}$, $\cG \to \cG_{\le n}$ be $n$-stubs for $\cG'$ and $\cG$, respectively.
	The diagram
	\[ \begin{tikzcd}
	\Sch(\cG'_{\le n}) \arrow{rr}{\mathrm{Spec}^{\cG_{\le n}}_{\cG'_{\le n}}} \arrow{d} & & \Sch(\cG_{\le n}) \arrow{d} \\
	\Sch(\cG') \arrow{rr}{\mathrm{Spec}^{\cG}_{\cG'}} & & \Sch(\cG)
	\end{tikzcd} \]
	commutes.
\end{prop}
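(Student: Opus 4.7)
The plan is to identify both composites in the diagram with a single ``total'' relative spectrum, exploiting the functoriality of the relative spectrum together with \cref{prop:relative_spectrum_and_truncations} and \cref{prop:relative_spectrum_truncated_structured_topoi}. First, I would invoke the universal property of the $n$-stubs $u \colon \cG \to \cG_{\le n}$ and $u' \colon \cG' \to \cG'_{\le n}$ to extract a canonical commutative square of geometries
\[ \begin{tikzcd}
\cG' \arrow{r}{\varphi} \arrow{d}{u'} & \cG \arrow{d}{u} \\
\cG'_{\le n} \arrow{r}{\varphi_n} & \cG_{\le n}
\end{tikzcd} \]
Since the relative spectrum is right adjoint to the restriction functor on structured $\infty$-topoi induced by a morphism of geometries, and composition of such morphisms corresponds to composition of restriction functors, passing to right adjoints yields the identities
\[ \mathrm{Spec}^{\cG_{\le n}}_{\cG} \circ \mathrm{Spec}^{\cG}_{\cG'} \simeq \mathrm{Spec}^{\cG_{\le n}}_{\cG'} \simeq \mathrm{Spec}^{\cG_{\le n}}_{\cG'_{\le n}} \circ \mathrm{Spec}^{\cG'_{\le n}}_{\cG'} \]
at the level of $\Top$.

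Next, starting from $(\cX, \cO) \in \Sch(\cG'_{\le n})$, its image in $\Sch(\cG')$ along the left vertical arrow is the pair $(\cX, \cO \circ u')$, whose structure sheaf is already $n$-truncated by construction. By \cref{prop:relative_spectrum_truncated_structured_topoi} this forces $\mathrm{Spec}^{\cG'_{\le n}}_{\cG'}(\cX, \cO \circ u') \simeq (\cX, \cO)$; moreover \cref{prop:relative_spectrum_and_truncations} ensures that $\mathrm{Spec}^{\cG}_{\cG'}(\cX, \cO \circ u')$ is itself $n$-truncated, so that its image under $\mathrm{Spec}^{\cG_{\le n}}_{\cG}$ is (an equivalent copy of) itself, now viewed inside $\Top(\cG_{\le n})$. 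Plugging these two observations into the two displayed equivalences produces a canonical identification of the two composites in the original diagram, both of which send $(\cX, \cO)$ to the image of $\mathrm{Spec}^{\cG_{\le n}}_{\cG'_{\le n}}(\cX, \cO)$ under the right vertical inclusion.

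The main obstacle will be keeping the argument honestly within the subcategories of schemes rather than only of structured topoi. Concretely, I need to know that $\Sch(\cG_{\le n})$ embeds fully faithfully into $\Sch(\cG)$ via restriction along $u$, so that the vertical arrows in the square are well defined, and that each of the four relative spectrum functors appearing above preserves schemes. The latter is the content of the scheme-preservation assertion for relative spectra in \cite{DAG-V}, whereas the former is a consequence of the compatibility of the scheme condition with the passage to $n$-truncations of the structure sheaf. Once these bookkeeping points are in place the argument is essentially formal, and one only has to check that the identifications constructed above are natural in $(\cX, \cO)$, which is immediate from the construction.
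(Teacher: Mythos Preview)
Your approach is correct and genuinely different from the paper's. The paper argues via the affine picture: it passes to the induced functor $\widetilde{\varphi} \colon \mathrm{Ind}((\cG')^{\mathrm{op}}) \to \mathrm{Ind}(\cG^{\mathrm{op}})$, observes that (being left exact) it preserves $k$-truncated objects, and then invokes two facts about absolute spectra, namely that $\Spec^{\cG}(A)$ has $k$-truncated structure sheaf when $A$ is $k$-truncated, and that $\mathrm{Spec}^{\cG}_{\cG'}(\Spec^{\cG'}(A)) \simeq \Spec^{\cG}(\widetilde{\varphi}(A))$. This reduces the question to affines and then glues.

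Your argument, by contrast, stays entirely at the level of structured topoi and is purely adjunction-theoretic: from the commutative square of geometries you extract $R_u \circ S \simeq S_n \circ R_{u'}$, precompose with the fully faithful forgetful $L_{u'}$ to get $S_n \simeq R_u \circ S \circ L_{u'}$, and then use \cref{prop:relative_spectrum_and_truncations} to see that $S \circ L_{u'}$ lands in $n$-truncated objects, whence $L_u R_u$ acts as the identity there and $L_u \circ S_n \simeq S \circ L_{u'}$. This is cleaner and avoids the affine reduction altogether. It is worth noting that your argument does not actually use the scheme hypothesis anywhere in the commutativity step; it works verbatim in $\Top$. The paper's remark immediately after the proposition says the author does not know whether the result extends to $\Top(\cG')$, so either your route is sharper than what the author had in mind, or the author is implicitly worried about whether \cref{prop:relative_spectrum_and_truncations} really delivers ``$n$-truncated output for $n$-truncated input'' at the level of $\Top$. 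As stated, that proposition does give you this (the identity map exhibits $S(\cX,\cO_\cX)$ as its own $n$-truncation precisely when the structure sheaf is $n$-truncated), so your use of it is legitimate. The bookkeeping you flag about the vertical arrows preserving schemes is needed for the diagram to be well-posed but is orthogonal to the commutativity itself, and the paper tacitly assumes it as well.
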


\begin{proof}
	Since $\varphi$ commutes with finite limits, the induced morphism
	\[ \widetilde{\varphi} \colon \mathrm{Ind}((\cG')^{\mathrm{op}}) \to \mathrm{Ind}(\cG^{\mathrm{op}}) \]
	commutes with finite limits as well.
	In particular, we see that it takes $k$-truncated objects to $k$-truncated objects.
	The statement now follows from the following pair of observations:
	\begin{enumerate}
		\item if $A \in \mathrm{Ind}(\cG^{\mathrm{op}})$ is $k$-truncated, then, writing $\Spec^{\cG}(A) = (\cX_A, \cO_A)$, $\cO_A$ is $k$-truncated;
		\item if $A \in \mathrm{Ind}((\cG')^{\mathrm{op}})$, then $\mathrm{Spec}^{\cG}_{\cG'}(\Spec^{\cG}(A)) \simeq \mathrm{Spec}^{\cG'}(\widetilde{\varphi}(A))$.
	\end{enumerate}
	As these statements follow directly from the definitions, the proof is complete.
\end{proof}

\bibliographystyle{plain}
\bibliography{dahema}

\end{document}